\newcommand{\RR}{\rm I\kern -1.6pt{\rm R}}
\newtheorem{theorem}{Theorem}[section]
\newtheorem{lemma}{Lemma}[section]
\numberwithin{equation}{section}
\begin{document}
\date{}

\title[Dynamics of epidemic models]{Dynamics of epidemic models with asymptomatic infection and seasonal succession}

\author{\sc Yilei Tang $^{\dag}$ \  Dongmei Xiao $^{\dag*}$
 \ Weinian Zhang $^{\ddag}$ \ Di Zhu $^{\dag}$}

%
%

\thanks{$^*$ Corresponding author.}

\thanks{{\bf Funding}: The first author is partially supported by the National Natural
Science Foundation of China (No. 11431008) and the European Union's Horizon 2020 research and innovation
programme under the Marie Sklodowska-Curie grant agreement (No. 655212). The second author is supported by  the National Natural
Science Foundation of China (No. 11431008 \& 11371248). The third author is supported by  the National Natural
Science Foundation of China (No. 11521061 \&  11231001). }

\thanks{$^{\dag}$ School of Mathematical Science, Shanghai Jiao Tong University,  Shanghai, 200240, P. R. China
(xiaodm@sjtu.edu.cn (D. Xiao),  mathtyl@sjtu.edu.cn (Y. Tang), di.zhu@auckland.ac.nz (D. Zhu)) }

 \thanks{$^{\ddag}$  Yangtze Center of Mathematics and Department of Mathematics, Sichuan University, Chengdu, Sichuan 610064, P. R. China  (matzwn@163.com (W. Zhang))}

\keywords{Epidemic model, asymptomatic infection, seasonal succession, basic reproduction number, threshold dynamics}

\subjclass{Primary 92D25, 34C23; Secondary 34D23}

\maketitle

{\bf Abstract~~} In this paper, we consider a compartmental SIRS epidemic model with asymptomatic infection and seasonal succession,
which is a periodic discontinuous differential system.
 The basic reproduction number $\mathcal{R}_0$ is defined and evaluated directly for this model, and the uniformly persistent of the disease
 and threshold dynamics are obtained. Specially, global dynamics
 of the model without seasonal force are studied. It is shown that
the model has only a disease-free equilibrium which is globally stable if $\mathcal{R}_0\le 1$,
and as $\mathcal{R}_0>1$  the disease-free equilibrium is unstable and the model has an endemic equilibrium, which is globally stable
if the recovering rates of asymptomatic infective and symptomatic infective are close. These theoretical results  provide an intuitive basis for
  understanding that the asymptomatic infective individuals and the disease seasonal transmission promote  the evolution of epidemic,
which allow us to predict the outcomes  of control strategies during the course of the epidemic.

\section{Introduction}

Since Kermack and McKendrick in \cite{Ker-McK} proposed the classical deterministic compartmental
model (called SIR model) to describe epidemic  outbreaks and spread,
mathematical models have become important tools in analyzing the
spread and control of infectious diseases, see \cite{AleM2005, May, Brauer, SeasonalScience, Heth2000, Hsi2014, THRZ08, Tow2012,  xiaoruan}
and references therein. The number of infected individuals used in these models is usually calculated via data in the hospitals. However,
some studies on influenza show that  some individuals of the population who
are infected never develop symptoms, i.e. being asymptomatic infective.  The asymptomatic infected individuals will not go to hospital but they can infect the susceptible by contact, then go to the recovered stage, see for instance
\cite{wujh, Long, Feng}. Hence, using the data from hospitals to mathematical models to assess the epidemic,
it seems that we will underestimate infection risks in epidemic.

On the other hand, seasonality is very common in ecological and human social systems (cf. \cite{xiao}). For example, variation patterns in climate are repeated every year,
birds migrate according to the variation of season, opening and closing of schools are almost periodic, and so on. These seasonal factors significantly influence
the survival of pathogens in the environment, host behavior, and abundance of vectors and non-human hosts.
A number of papers have suggested that seasonality plays
an important role in epidemic outbreaks and the evolution of disease transmissions, see
\cite{PeriodWu, periodC, SeasonalD, SeasonalScience, seasonalS, Smi1983, seasonalNature, Tow2012, zhang2007}. However, it is still
challenging to understand the mechanisms of seasonality and
their impacts on the dynamics of infectious diseases.

Motivated by the studies above on asymptomatic infectivity or seasonality, we develop a compartmental model with asymptomatic infectivity and seasonal factors in this paper.
This model is a periodic discontinuous differential system.
We try to establish the  theoretical analysis on the periodic discontinuous differential systems and obtain the dynamics of the model. This will allow us to draw both qualitative and quantitative conclusions on effect of the asymptomatic infectivity and seasonality on the epidemic.

 The rest of the paper is organized as follows.
 In section 2, we formulate the SIRS model with asymptomatic infective and seasonal factors, then discuss the existence and regularity of non-negative solutions
 for this model.
 In section 3, we define the basic reproduction number $\mathcal{R}_0$   for the model, give the evaluation of  $\mathcal{R}_0$
 and investigate the threshold dynamics  of the model (or the uniformly persistent of the disease). It is shown that the length of the season, the transmission rate
and the existence of asymptomatic infective affect the basic reproduction number  $\mathcal{R}_0$. In section 4, we study the global dynamics of the model ignoring seasonal factor.
We prove that there is a unique disease-free equilibrium and the disease always dies out when $\mathcal{R}_0\le 1$;
 while when $\mathcal{R}_0> 1$ there is an endemic equilibrium which is global stable  if the recovering rates of asymptomatic infective and symptomatic infective are close.
A brief discussion is given in the last section.

\section{Model formulation}
In this section, we first extend the classic SIRS model  to a model which incorporates with
the asymptomatic infective  and seasonal features of epidemics,  and then study the regularity of solutions of the model.

Because there are asymptomatic infectious and
symptomatic infectious individuals in the evolution of epidemic, the whole population is divided into four compartments: susceptible, asymptomatic infectious,
symptomatic infectious and recovered individuals.
More precisely, we let $S$, $I_a$, $I_s$ and $R$ denote the numbers of
individuals in the susceptible, asymptomatic,
symptomatic and recovered compartments, respectively,
and $N$ be the total population size.  Let $\mathbb{R}_+=[0, +\infty)$, $\mathbb{Z}_+$ be the set of all nonnegative integers, and $\omega>0$ be given as the period of the disease transmissions. In addition to the assumptions of the classical SIRS model, we list the following assumptions on seasonal factors,
 asymptomatic infectivity and symptomatic infectivity.

\begin{itemize}
\item[(A1)] Due to the opening and closing of schools or migration of birds, each period of the disease transmission is simply divided into two seasons with high and low
transmission rates, which are called high season $J_2$ and low season $J_1$, respectively. The seasonality is described by a piecewise constant function with high transmission rate $\beta_2$ in $J_2$ and low transmission rate $\beta_1$ in $J_1$, respectively, where $J_1=[m\omega, m\omega+(1-\theta)\omega )$ and $J_2=[ m\omega+(1-\theta)\omega, (m+1)\omega)$.
Here $m\in \mathbb{Z}_+$,  and  $0<\theta<1$  which measures the fraction of the high season to the whole infection cycle.
\item[(A2)] There are two classes of infective individuals: asymptomatic infective ones and symptomatic infective ones. Both of them are able to infect susceptible individuals by contact.
    A fraction $\mu$ of infective individuals  proceeds to the
asymptomatic infective compartment and the remainder (i.e.  a fraction $1-\mu$ of infective individuals) goes directly to the symptomatic infective compartment. And the asymptomatic infective and symptomatic infective individuals recover from disease at rate $r_a$ and $r_s$, respectively.
\item[(A3)]  The symptomatic infective individuals will get treatment in hospital or be quarantined. Hence, the symptomatic infective individuals reduce their contact
rate by a fraction $\alpha$.
\end{itemize}

Based on these assumptions, the classical SIRS model can be extended to the following system

\begin{equation}\label{model}
\begin{cases}
\dot{S}(t)=dN(t)-dS(t)-\beta(t) S(t)(I_{a}(t)+\alpha I_{s}(t))+\sigma R(t), \\
\dot{I_{a}}(t)=\mu\beta(t) S(t)(I_{a}(t)+\alpha I_{s}(t))-(d+r_{a})I_{a}(t), \\
\dot{I_{s}}(t)=(1-\mu)\beta(t) S(t)(I_{a}(t)+\alpha I_{s}(t))-(d+r_{s})I_{s}(t), \\
\dot{R}(t)=r_{a}I_{a}(t)+r_{s}I_{s}(t)-(d+\sigma)R(t),
\end{cases}
\end{equation}
where $N(t)=S(t)+I_a(t)+I_s(t)+R(t)$, all parameters $d$, $\alpha$, $\sigma$, $\mu$, $r_a$ and $r_s$ are nonnegative, and
$$
\beta(t)=\left\{ \begin{array}{ll}
\beta_1, \ & t\in J_1=[m\omega, m\omega+(1-\theta)\omega ),\\
\beta_2, \ & t\in J_2=[ m\omega+(1-\theta)\omega, (m+1)\omega ).\\
\end{array}
\right.
$$
 Parameters $\beta_2$  and $\beta_1$ are the rates of contact transmission of the disease in high season and low season respectively for which $\beta_2\ge \beta_1\ge 0$.
 Besides,  $d$ is  both birth rate and death rate, $\alpha$, $0\le\alpha\le 1$, is the fraction of the symptomatic infective individuals reducing their contact
 rate with susceptible,  the fraction of infective individuals becoming asymptomatic infective $\mu$ satisfies $0\le\mu\le 1$,
  parameter $\sigma$ is the rate of recovered population loss of the immunity and  reentering the susceptible group,
  and $r_a$ and $r_s$ are the rates of asymptomatic infective and symptomatic infective recovering with immunity, respectively.

From the biological point of view, we focus on the solutions of system (\ref{model}) with initial conditions
\begin{equation}\label{initial}
 S(0)=S_0 \ge 0, I_a(0)=I_{a0} \ge 0, I_s(0)=I_{s0}\ge 0, R(0)=R_0 \ge 0
 \end{equation}
in the first octant $\mathbb{R}_+^4$.

Note that
\[
\dot{ N}(t)=\dot{S}(t) + \dot{ I}_a(t) + \dot{I}_s(t) + \dot{ R}(t) \equiv 0, \ t\in J_1 \ \rm{or}\ t\in J_2.
\]
 Hence,  $N(t)=S_0+I_{a0}+I_{s0}+R_0$, which is a constant for almost all $t\in \mathbb{R}_+$.
Since the total population does not change by the assumption, we let
\[
S(t) + I_a(t) +I_s(t) + R(t) \equiv N
\]
for almost all $t\in \mathbb{R}_+$. Therefore,  system \eqref{model} with the initial condition \eqref{initial} in $\mathbb{R}_+^4$ can be reduced to
\begin{equation}\label{SIRS3}
\begin{cases}
\dot{S}=(d+\sigma)(N-S)-\beta (t) S(I_a+\alpha I_s)-\sigma (I_a+I_s), \\
\dot{I_a}=\mu\beta (t) S(I_a+\alpha I_s)-(d+r_a)I_a, \\
\dot{I_s}=(1-\mu)\beta (t) S(I_a+\alpha I_s)-(d+r_s)I_s,  \\
S(0)=S_0, I_a(0)=I_{a0}, I_s(0)=I_{s0},\\
P_0=(S_0, I_{a0}, I_{s0})\in \mathcal{D}_0,
\end{cases}
\end{equation}
where $\mathcal{D}_0\subset \mathbb{R}_+^3$ and
\begin{equation}\label{D}
\mathcal{D}_0:=\{(S, I_a, I_s)|\; S\ge 0,  I_a\ge 0, I_s\ge 0,  ~0\le S+ I_a+I_s\le N \}.
\end{equation}
Clearly, the right hand side of system \eqref{SIRS3} is not continuous on the
domain $\mathbb{R}_+\times\mathcal{D}_0$. We  claim that the solution of system \eqref{SIRS3} exists globally on the interval $\mathbb{R}_+=[0, +\infty)$ and is unique.

\begin{theorem}\label{existenceUni}
For any $P_0\in \mathcal{D}_0$, system \eqref{SIRS3} has a unique global solution $\varphi(t, P_0)=(S(t,P_0), I_a(t,P_0), I_s(t,P_0))$
 in $\mathbb{R}_+$, which is continuous with respect to $t$ and all parameters of this system.

Moreover, $\varphi(t, P_0)\subseteq \mathcal{D}_0$ for any $t\in \mathbb{R}_+$ and the solution $\varphi(t, P_0)$ is differentiable with respect to $P_0$, where
 some derivatives are one-sided if $P_0$ is on the domain boundary.
\end{theorem}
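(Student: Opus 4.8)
The plan is to exploit the fact that the only discontinuity in the right-hand side of \eqref{SIRS3} sits in the time variable and is confined to the discrete, locally finite set of switching instants
\[
\mathcal{T}=\{\,m\omega:\ m\ge1\,\}\cup\{\,m\omega+(1-\theta)\omega:\ m\ge0\,\},
\]
which together with $t_0:=0$ we enumerate as $0=t_0<t_1<t_2<\cdots$ with $t_k\to+\infty$. On each closed interval $[t_k,t_{k+1}]$ the coefficient $\beta(t)$ equals a constant $\beta_{i(k)}\in\{\beta_1,\beta_2\}$, and the corresponding vector field $F_{i(k)}$ — obtained from \eqref{SIRS3} by freezing $\beta(t)\equiv\beta_{i(k)}$ — is a polynomial in $(S,I_a,I_s)$, hence $C^\infty$ and locally Lipschitz on $\mathbb{R}^3$. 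First I would apply the Picard--Lindel\"of theorem on $[t_0,t_1]$ to obtain a unique local solution issuing from $P_0$, take its value at $t_1$ as the initial condition on $[t_1,t_2]$, and iterate. The resulting concatenation is continuous on $\mathbb{R}_+$, of class $C^1$ on each $(t_k,t_{k+1})$, and satisfies \eqref{SIRS3} for every $t\notin\mathcal{T}$; this is exactly the (Carath\'eodory) notion of solution appropriate here, and uniqueness on each piece together with the matching of endpoint values yields uniqueness on $\mathbb{R}_+$.

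I would then establish the positive invariance of $\mathcal{D}_0$, which at the same time gives global existence. Since $\mathcal{D}_0$ is a closed convex polytope, by the standard invariance criterion for locally Lipschitz vector fields it suffices to check, on each smooth subinterval, that $F_{i}$ lies in the tangent cone of $\mathcal{D}_0$ along $\partial\mathcal{D}_0$, i.e. the inequalities: on $\{S=0\}$, $\dot S=(d+\sigma)N-\sigma(I_a+I_s)\ge dN\ge0$ because $I_a+I_s\le N$; on $\{I_a=0\}$, $\dot I_a=\mu\alpha\beta_i SI_s\ge0$; on $\{I_s=0\}$, $\dot I_s=(1-\mu)\beta_i SI_a\ge0$; and, writing $V:=S+I_a+I_s$, the quadratic terms cancel, so that on $\{V=N\}$ one has $\dot V=-r_aI_a-r_sI_s\le0$. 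Hence $\varphi(t,P_0)\in\mathcal{D}_0$ on $[t_k,t_{k+1}]$ whenever $\varphi(t_k,P_0)\in\mathcal{D}_0$, and by induction on $k$ for all $t\in\mathbb{R}_+$. Because $\mathcal{D}_0$ is compact, the local solution on each $[t_k,t_{k+1}]$ remains bounded and therefore cannot cease to exist before $t_{k+1}$; concatenating produces a solution defined on all of $\mathbb{R}_+=\bigcup_{k}[t_k,t_{k+1}]$.

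For the regularity assertions I would invoke, on each subinterval, the classical theorems on continuous (in fact $C^\infty$) dependence of solutions of $\dot x=F_i(x)$ on the initial point and on the parameters $d,\sigma,\mu,\alpha,r_a,r_s,\beta_1,\beta_2,N$. As only finitely many instants of $\mathcal{T}$ lie in any $[0,t]$, the time-$t$ map $P_0\mapsto\varphi(t,P_0)$ is a finite composition of such flow maps with the evaluation-at-$t_k$ maps; continuity in $t$ (within each subinterval trivially, across the $t_k$ by the continuity already proved) and in the listed parameters follows, and the chain rule gives differentiability in $P_0$, with $D_{P_0}\varphi$ solving the piecewise-defined variational equation. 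Continuity with respect to $\omega$ and $\theta$, which displace the switching instants, then follows from joint continuity in $(t,\cdot)$ together with the uniform Lipschitz bound of $F_i$ on $\mathcal{D}_0$. Finally, for $P_0\in\partial\mathcal{D}_0$ only perturbations that keep the initial point in $\mathcal{D}_0$ are relevant to the model, so only the corresponding one-sided derivatives of $\varphi(t,\cdot)$ are claimed.

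The genuinely non-routine aspect is the bookkeeping forced by the time-discontinuity: one must pin down the admissible class of solutions (absolutely continuous, solving \eqref{SIRS3} off the null set $\mathcal{T}$), verify that the piecewise Picard construction produces precisely that object with no loss of uniqueness at the $t_k$, and make sure the invariance argument goes through on each face even though the boundary inequalities above are only weak — which is why the convex-set/tangent-cone version of the invariance theorem, rather than a strict-inflow argument, is used. None of these steps is deep, but they are where the attention has to go.
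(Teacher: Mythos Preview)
Your proposal is correct and follows essentially the same route as the paper: both arguments freeze $\beta(t)$ on each inter-switching interval to obtain a smooth autonomous system, verify positive invariance of $\mathcal{D}_0$ by checking the vector field on each boundary face, concatenate the resulting flows $\phi_1,\phi_2$ across the switching times, and read off global existence, uniqueness, continuity, and differentiability in $P_0$ from the corresponding properties of the pieces together with the chain rule. Your write-up is in fact more careful than the paper's on several points (naming the Carath\'eodory solution class, using the tangent-cone invariance criterion to handle the merely weak boundary inequalities, and addressing continuity in the switching parameters $\omega,\theta$), but the underlying decomposition and logic are the same.
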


\begin{proof}
Assume that $\varphi(t,P_0)$ is a solution of system \eqref{SIRS3}. We first
consider the two systems
\begin{equation}\label{SIR-dim32}
\begin{cases}
\dot{S}=(d+\sigma)(N-S)-\beta_i S(I_a+\alpha I_s)-\sigma (I_a+I_s),\\
\dot{I}_a=\mu\beta_i S(I_a+\alpha I_s)-(d+r_a)I_a,\\
\dot{I}_s=(1-\mu)\beta_i S(I_a+\alpha I_s)-(d+r_s)I_s,\\
S(t_*)=S_*,\; I_{a}(t_*)=I_{a*},\; I_{s}(t_*)=I_{s*},\\
P_*=(S_*, I_{a*},I_{s*})\in \mathbb{R}_+^3
\end{cases}
\end{equation}
in the domain $\mathbb{R}_+\times\mathbb{R}_+^3$, $i=1, 2,$ respectively.

It is clear that for each $i$ the solution of system \eqref{SIR-dim32} exists and is unique on its maximal interval of existence, and the solution of system \eqref{SIR-dim32} is differentiable with respect to the initial value $P_*$ by the fundamental theory of ordinary differential equations.

Note that
the bounded closed set $\mathcal{D}_0$ in $\mathbb{R}_+^3$ is a positive compact invariant set of system \eqref{SIR-dim32} since  the vector field of system \eqref{SIR-dim32} on
the boundary $\partial\mathcal{D}_0$ of $\mathcal{D}_0$ is directed toward to the interior of $\mathcal{D}_0$ or lies on $\partial\mathcal{D}_0$, where
\begin{eqnarray*}
\begin{split}
\partial\mathcal{D}_0=&\{(S,I_a,I_s):\ (S,I_a,I_s)\in \mathbb{R}_+^3, S=0, \ 0\le I_a+I_s\le N\}\\
&\cup\{(S,I_a,I_s):\ (S,I_a,I_s)\in \mathbb{R}_+^3, I_s =0,
\ 0\le S+I_a\le N\}  \\
 & \cup\{(S,I_a,I_s): \ (S,I_a,I_s)\in \mathbb{R}_+^3, I_a=0,\  0\le S+I_s\le N\}\\
 &\cup \{(S,I_a,I_s): \ (S,I_a,I_s)\in \mathbb{R}_+^3, S+I_s+I_a=N\}.
\end{split}
\end{eqnarray*}
Therefore,  the solution of system \eqref{SIR-dim32} exists globally  for any $P_*\in \mathcal{D}_0\subset\mathbb{R}_+^3$, and these solutions are in $\mathcal{D}_0$
for all $t> 0$.

Let $\phi_i(t,t_*, P_*)$ for $i=1, 2$
be the solution semiflow of the following system
\begin{equation}\label{SIRS1}
\begin{cases}
\dot{S}=(d+\sigma)(N-S)-\beta_i S(I_a+\alpha I_s)-\sigma (I_a+I_s),\\
\dot{I}_a=\mu\beta_i S(I_a+\alpha I_s)-(d+r_a)I_a,\\
\dot{I}_s=(1-\mu)\beta_i S(I_a+\alpha I_s)-(d+r_s)I_s,\\
\phi_i(t_*,t_*, P_*)=P_*, \ P_*\in \mathcal{D}_0,
\end{cases}
\end{equation}
respectively,  that is,  $\phi_i(t,t_*, P_*) = (S(t,t_*, P_*), I_a(t,t_*, P_*), I_s(t,t_*, P_*))$ for $t\ge t_*$  is the
solution of system \eqref{SIRS1} with the initial condition $\phi_i(t_*,t_*, P_*)=(S_*, I_{a*},I_{s*})\in \mathcal{D}_0$, respectively.

It follows that the solution
$\varphi(t,P_0)$ for  $t\ge 0$  of system \eqref{SIRS3} can be determined uniquely  by induction.
For simplicity, we let $s_m=(m-1)\omega$ and  $t_m=s_m+(1-\theta)\omega$ for $m\in \mathbb{Z}_+$.
Hence,
$$
[0,\infty )=\bigcup _{m=1}^\infty [s_m, s_{m+1}]
=\bigcup _{m=1}^\infty
([s_{m}, t_m]\cup [t_m, s_{m+1}]),
$$
and $\varphi(t,P_0)$ can be written as follows.
\begin{equation}\label{solution}
\varphi(t,P_0)=\left\{
  \begin{array}{ll}
   \phi _1(t,s_1, P_0) &\textrm{when}\;t\in [s_1, t_1],
   \\[1ex]
  \phi _2(t, t_1, \phi_1(t_1, s_1, P_0)) &\textrm{when}\;\;t\in [t_1, s_2], \\[1ex]
  ...
  \\[1ex]
  \phi _1(t, s_m,  u_m) & \textrm{when}\;\;t\in [s_m , t_m],  \\[1ex]
  \phi _2(t, t_m,  v_m) & \textrm{when}\;\;t\in [t_m, s_{m+1} ],
    \end{array}
\right.
\end{equation}
where $u_m$ and $v_m$ are determined by letting $u_1=P_0$, $v_1=\phi_1(t_1, s_1, u_1)$ and
$$
u_m=\phi _2(s_m, t_{m-1}, v_{m-1}),\; v_m=\phi _1(t_m, s_m, u_m) \;\;\textrm{for}\;\;m\geq 2.
$$
 This implies that the solution $\varphi(t,P_0)$ of system \eqref{SIRS3} exits globally in $\mathbb{R}_+$ and is unique for any $P_0\in \mathcal{D}_0$,
 and  it is continuous with respect to $t$ and all parameters.

 By the expression \eqref{solution},
it is easy to see that the solution $\varphi(t,P_0)$ lies in $\mathcal{D}_0$ for all $t\ge 0$ and $\varphi(t,P_0)$ is differentiable with respect to $P_0$.
The proof is completed.
\end{proof}
Theorem \ref{existenceUni} tells us that  system \eqref{SIRS3} is $\omega$-periodic with respect to $t$ in $\mathbb{R}_+\times \mathcal{D}_0$, and it suffices to investigate the dynamics of its associated period map $\mathcal{P}$ on $\mathcal{D}_0$ for the dynamics of system \eqref{SIRS3}, where
\begin{equation}\label{poincaremap}
\begin{split}
\mathcal{P}:& \ \mathcal{D}_0 \to \mathcal{D}_0,\\
 \mathcal{P}(P_0)&=\varphi(\omega,P_0)=\phi _2(\omega, (1-\theta)\omega, \phi_1((1-\theta)\omega, 0, P_0)),
 \end{split}
\end{equation}
which is continuous in $\mathcal{D}_0$.


\section{ Basic reproduction number and threshold dynamics}

In epidemiology, the basic reproduction number (or basic reproduction ratio) $\mathcal{R}_0$ is an important quantity,
defined as the average number of secondary infections produced when an infected individual is introduced into a host population where everyone is susceptible. It is often considered as the threshold quantity that determines whether
an infection can invade a new host population and persist. Detailedly speaking, if $\mathcal{R}_0<1$, the disease dies out
and the disease cannot invade the population; but if $\mathcal{R}_0>1$, then the disease is established in the population. There have been some successful approaches for
the calculations of basic reproduction number for different epidemic models. For example, Diekmann {\it et al} in \cite{Die1990}
and van den Driessche and Watmough in \cite{Van2002} presented general approaches of  $\mathcal{R}_0$ for autonomous continuous epidemic models.  And for periodic continuous epidemic models,
Wang and Zhao in \cite{Wang2008}
defined the basic reproduction number. 
 Under some  assumptions on the discontinuous states function,
Guo,  Huang and Zou \cite{Guo2012} determine the basic reproduction number for  an SIR epidemic model with discontinuous treatment strategies.
To our knowledge, there is no theoretic approach to calculate the basic reproduction number for periodic discontinuous epidemic models such as system \eqref{SIRS3}.
In this section, we use the idea and some notations given in \cite{Wang2008} to define and calculate the basic reproduction numbers for system \eqref{SIRS3},
and discuss the uniformly persistent of the disease  and threshold dynamics.


We define ${\bf X}$ to be the set of all disease free states of system \eqref{SIRS3}, that is
$${\bf X}=\{(S,I_a,I_s):\ 0\le S\le N, I_a=I_s=0\}.$$
Clearly,  the disease free subspace ${\bf X}$ is positive invariant for system \eqref{SIRS3}.
 It can be checked that the period map $\mathcal{P}(P_0)$ in ${\bf X}$ has a unique fixed point at $(N,0,0)$, which is a unique disease-free equilibrium $(N,0,0)$
of  system  \eqref{SIRS3}, denoted by $E_0$. We now consider a population near the disease-free equilibrium $E_0$.

For simplicity,  we let $\mathbf{x}=(S,I_a,I_s)^T$, and for $i=1,2$ set
 \begin{equation*}\label{FV}
 \begin{array}{ll}
 \ \mathbf{F_i}&=\left(
  \begin{array}{rrr}
0  & 0  & 0
\\
0 & \mu\beta_iN & \alpha\mu\beta_iN
\\
0 & (1-\mu)\beta_iN &  \alpha(1-\mu)\beta_iN
  \end{array}
 \right):=\left(
  \begin{array}{rr}
0  & {\bf 0 }
\\
{\bf 0} & F_i
  \end{array}
 \right),\\ \mathbf{V_i}&=\left(
  \begin{array}{rrr}
d+\sigma  & \beta_iN+\sigma  & \alpha\beta_iN+\sigma
\\
0 & d+r_a & 0
\\
0 & 0 &  d+r_s
  \end{array}
 \right):=\left(
  \begin{array}{rr}
d+\sigma  & {\bf b_i }
\\
{\bf 0} & V
  \end{array}
 \right).
 \end{array}
 \end{equation*}

Then the linearized system of \eqref{SIRS3} at $E_0$ can be rewritten as
\begin{equation}\label{real_linear}
\frac{d\mathbf{x}}{dt}=(\mathbf{F}(t)-\mathbf{V}(t))\mathbf{x},
\end{equation}
 where $\mathbf{F}(t)=\chi_{J_1}(t)\mathbf{F}_{1}\ +\chi_{J_2}(t)\mathbf{F}_{2}\ $, $\mathbf{V}(t)=\chi_{J_1}(t)\mathbf{V}_{1}+\chi_{J_2}(t)\mathbf{V}_{2}$, and
 $$
 \chi_{J_i}(t)=\left\{\begin{array}{ll}
 1  & \ {\textrm as }\ t\in J_i,
\\
0 & \ {\textrm as }\ t\notin J_i.
  \end{array}
 \right.
 $$
System \eqref{real_linear} is a piecewise continuous periodic linear system with period $\omega$ in $t\in \mathbb{R}_+$.
In order to determine the fate of a small number of infective individuals introduced into a disease free population,
we first extend system \eqref{real_linear} from $t\in \mathbb{R}_+$ to $t\in \mathbb{R}$, and introduce some new notations.
When $t\in \cup_{m=-\infty}^{+\infty}(J_1\cup J_2)=(-\infty, +\infty),$ we set $\mathbb{I}(t)=(I_a(t),I_s(t))^T$, and  $$
\mathbb{F}(t)=
\chi_{J_1}(t){F}_{1}\ +\chi_{J_2}(t){F}_{2}=\left(
  \begin{array}{rr}
 \mu N\beta(t) & \alpha\mu  N\beta(t)
\\
 (1-\mu) N\beta(t)  &  \alpha(1-\mu)  N\beta(t)
  \end{array}
 \right),
$$
 where
 \[ \beta(t)=\left\{ \begin{array}{ll}
\beta_1, \ & t\in J_1=[m\omega, m\omega+(1-\theta)\omega ),\\
\beta_2, \ & t\in J_2=[ m\omega+(1-\theta)\omega, (m+1)\omega), \ m\in \mathbb{Z}.\\
\end{array}
\right.\]
Clearly,
$\mathbb{F}(t)$ is a $2\times 2$ piecewise  continuous periodic matrix with period $\omega$ in $\mathbb{R}$, and it is non-negative. And
$$
-V=\left(
  \begin{array}{rr}
 -(d+r_a) & 0
\\
 0  &  -(d+r_s)
  \end{array}
 \right),
$$
which is cooperative in the sense that the off-diagonal
 elements of $-V$ are non-negative.

Let $Y(t,s)$, $t\ge s$, be the evolution operator of the linear system
\begin{equation}\label{vv}
\frac{d\mathbb{I}(t)}{dt}=-V\mathbb{I}(t).
\end{equation}
Since $V$ is  a constant matrix,  for each $s\in \mathbb{R}$ the matrix $Y(t,s)$ satisfies
\begin{equation}\label{v2eq}
\frac{d}{dt}Y(t,s)=-V Y(t,s), \ t\ge s, \ Y(s,s)=E^2,
\end{equation}
where $E^2$ is a $2\times 2$ identity matrix, and $Y(t,s)=e^{-V(t-s)}$. Hence, the monodromy matrix $\Phi_{-V}(t)$ of system \eqref{vv} is $Y(t,0)$, that is,
$$
\Phi_{-V}(t)=e^{-Vt}=\left(
  \begin{array}{ll}
e^{-(d+r_a)t} & 0
\\
 0 & e^{-(d+r_s)t}
  \end{array}
 \right),
$$
 where $d$, $r_a$ and $r_s$ are positive numbers.

We denote $\|\cdot\|_1$ the $1$-norm of vector and matrix.
Thus, there exist $K>0$ and $\kappa >0$ such that
$$
\|Y(t,s)\|_1\le Ke^{-\kappa (t-s)}, \ \forall t\ge s, \ s\in \mathbb{R}.
$$
And from the boundedness  of $\mathbb{F}(t)$, i.e. $\|\mathbb{F}(t)\|_1<K_1$, it follows that there exists  a constant $K_1>0$ such that
\begin{equation}\label{Lineq}
\|Y(t,t-a)\mathbb{F}(t-a)\|_1\le K K_1e^{-\kappa a},\ \forall t\in \mathbb{R},\ a\in [0, +\infty).
\end{equation}

 We now consider the distribution of infected individuals in the periodic environment. Assume that $\mathbb{I}(s)$ is the initial distribution of infected individuals in infectious compartments. Then  $\mathbb{F}(s)\mathbb{I}(s)$
is the distribution of new infections produced by the infected individuals who were introduced at time $s$.
  Given $t\ge s$, then $Y(t,s)\mathbb{F}(s)\mathbb{I}(s)$
  is the distribution of
those infected individuals  which were newly infected at time s and still remain in the infected
compartments at time t.
Thus, the integration of this distribution from $-\infty$ to $t$
$$
\int_{-\infty}^tY(t,s)\mathbb{F}(s)\mathbb{I}(s)ds
=\int_0^{\infty}Y(t,t-a)\mathbb{F}(t-a)\mathbb{I}(t-a)da
$$
gives the distribution of cumulative new infections at time t produced by all those
infected individuals introduced at times earlier than $t$.

Let $\mathbb{C}_{\omega}=\mathbb{C}(\mathbb{R},\mathbb{R}^2)$  be the ordered Banach space of $\omega$-periodic  continuous  functions from $\mathbb{R}$ to $\mathbb{R}^2$, which is equipped with the  norm $\left \lVert \cdot \right \rVert _c$,
$$
\left \lVert \mathbb{I}(s) \right \rVert_c=\max _{s\in [0,\omega ]}\left \lVert\mathbb{I}(s)\right \rVert_1,
$$
 and the generating positive cone
 $$
 \mathbb{C}^+_{\omega}=\{\mathbb{I}(s)\in \mathbb{C}_{\omega}:\ \mathbb{I}(s)\ge 0, \ s\in \mathbb{R}\}.
 $$

Define a linear
operator $\mathcal{L}: \ \mathbb{C}_{\omega}\to \mathbb{C}_{\omega}$ by
\begin{equation}\label{operate}
(\mathcal{L}\mathbb{I})(t)=\int_{-\infty}^tY(t,s)\mathbb{F}(s)\mathbb{I}(s)ds=\int_0^{\infty}Y(t,t-a)\mathbb{F}(t-a)\mathbb{I}(t-a)da.
\end{equation}
It can be checked that the linear operator $\mathcal{L}$ is well defined.

\begin{lemma}\label{operatorC}
The operator $\mathcal{L}$ is positive, continuous and compact on $\mathbb{C}_{\omega}$.
\end{lemma}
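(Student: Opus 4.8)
The plan is to verify the three properties—positivity, continuity (i.e.\ boundedness), and compactness—separately, with positivity and boundedness being quick consequences of the sign and decay estimates already established, and compactness being the only real work. For positivity, observe that $\mathbb{F}(s)$ is a non-negative matrix for every $s$ (entries are non-negative multiples of $\beta(s)\ge 0$) and that $Y(t,s)=e^{-V(t-s)}$ is a diagonal matrix with positive entries $e^{-(d+r_a)(t-s)}$, $e^{-(d+r_s)(t-s)}$ for $t\ge s$; hence the integrand $Y(t,t-a)\mathbb{F}(t-a)\mathbb{I}(t-a)$ is entrywise non-negative whenever $\mathbb{I}\in\mathbb{C}^+_\omega$, and the integral inherits this, so $\mathcal{L}(\mathbb{C}^+_\omega)\subseteq\mathbb{C}^+_\omega$. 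For boundedness, apply the estimate \eqref{Lineq}: for any $\mathbb{I}\in\mathbb{C}_\omega$,
\[
\left\lVert(\mathcal{L}\mathbb{I})(t)\right\rVert_1\le\int_0^\infty\left\lVert Y(t,t-a)\mathbb{F}(t-a)\right\rVert_1\left\lVert\mathbb{I}(t-a)\right\rVert_1\,da\le KK_1\left\lVert\mathbb{I}\right\rVert_c\int_0^\infty e^{-\kappa a}\,da=\frac{KK_1}{\kappa}\left\lVert\mathbb{I}\right\rVert_c,
\]
so $\mathcal{L}$ is a bounded (hence continuous) linear operator with $\left\lVert\mathcal{L}\right\rVert\le KK_1/\kappa$. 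One should also note that $\mathcal{L}\mathbb{I}$ is genuinely $\omega$-periodic: replacing $t$ by $t+\omega$ and using $\mathbb{F}(t-a+\omega)=\mathbb{F}(t-a)$, $\mathbb{I}(t-a+\omega)=\mathbb{I}(t-a)$ together with the time-translation invariance $Y(t+\omega,t+\omega-a)=Y(t,t-a)$ (valid since $V$ is constant) leaves the integral unchanged, and continuity in $t$ follows from dominated convergence using the same exponential bound.

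The main obstacle is compactness. I would prove it via the Arzel\`a--Ascoli theorem on $\mathbb{C}([0,\omega],\mathbb{R}^2)$ (which suffices since elements of $\mathbb{C}_\omega$ are determined by their restriction to $[0,\omega]$). Let $B$ be the unit ball of $\mathbb{C}_\omega$. Uniform boundedness of $\mathcal{L}(B)$ is the estimate above. For equicontinuity, fix $t_1<t_2$ in $[0,\omega]$ and write
\[
(\mathcal{L}\mathbb{I})(t_2)-(\mathcal{L}\mathbb{I})(t_1)=\int_{-\infty}^{t_1}\bigl(Y(t_2,s)-Y(t_1,s)\bigr)\mathbb{F}(s)\mathbb{I}(s)\,ds+\int_{t_1}^{t_2}Y(t_2,s)\mathbb{F}(s)\mathbb{I}(s)\,ds.
\]
The second integral is bounded by $KK_1\,\lVert\mathbb{I}\rVert_c\,(t_2-t_1)$, uniformly small. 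For the first, use $Y(t_2,s)-Y(t_1,s)=\bigl(e^{-V(t_2-t_1)}-E^2\bigr)Y(t_1,s)$, so that the first integral equals $\bigl(e^{-V(t_2-t_1)}-E^2\bigr)\int_{-\infty}^{t_1}Y(t_1,s)\mathbb{F}(s)\mathbb{I}(s)\,ds$, whose norm is at most $\left\lVert e^{-V(t_2-t_1)}-E^2\right\rVert_1\cdot\frac{KK_1}{\kappa}\lVert\mathbb{I}\rVert_c$; since $e^{-V\tau}\to E^2$ as $\tau\to 0^+$, this too is uniformly small. Hence $\mathcal{L}(B)$ is equicontinuous, and by Arzel\`a--Ascoli it is precompact, so $\mathcal{L}$ is compact.

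A small technical point to watch: the integrand is only piecewise continuous in $s$ because $\mathbb{F}$ jumps at the season-switch times $m\omega+(1-\theta)\omega$, so the integrals are Lebesgue integrals and the map $t\mapsto(\mathcal{L}\mathbb{I})(t)$ must be checked to be continuous despite these jumps—this is handled by dominated convergence as noted. Once continuity of $\mathcal{L}\mathbb{I}$ in $t$ and periodicity are confirmed, $\mathcal{L}$ indeed maps $\mathbb{C}_\omega$ into $\mathbb{C}_\omega$ and the three asserted properties all follow. This completes the proof.
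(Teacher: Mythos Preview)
Your proof is correct and follows essentially the same strategy as the paper's: positivity from non-negativity of $Y(t,s)$ and $\mathbb{F}(s)$, boundedness from the exponential decay estimate \eqref{Lineq}, and compactness via Arzel\`a--Ascoli with an equicontinuity argument that splits $(\mathcal{L}\mathbb{I})(t_2)-(\mathcal{L}\mathbb{I})(t_1)$ into a ``tail'' piece controlled by $\lVert e^{-V(t_2-t_1)}-E^2\rVert_1$ and a ``short'' piece of length $t_2-t_1$. Your version is in fact a bit tidier in two places---you integrate $e^{-\kappa a}$ directly rather than summing a geometric series over periods, and your semigroup factorization $Y(t_2,s)-Y(t_1,s)=(e^{-V(t_2-t_1)}-E^2)Y(t_1,s)$ avoids the paper's separate handling of $e^{-Vt}$ and $e^{Vs}$---but these are cosmetic differences, not a different route.
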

\begin{proof}
Since $Y(t,s)=e^{-V(t-s)}$ and $\mathbb{F}(t)$ is a nonnegative bounded matrix,
we get that $\mathcal{L}(\mathbb{C}^+_{\omega})\subset \mathbb{C}^+_{\omega}$. This implies that the
linear operator $\mathcal{L}$ is positive.

We now prove the continuity of $\mathcal{L}$. For each $t\in  \mathbb{R}$, we have
\begin{eqnarray*}
\begin{split}
\|\mathcal{L}\mathbb{I}(t)\|_{1}&=\left \|\int_0^{\infty}Y(t,t-a)\mathbb{F}(t-a)\mathbb{I}(t-a)da\right \|_{1}\\
&=\left \|\sum_{j=0}^\infty\int_{j\omega}^{(j+1)\omega}Y(t,t-a)\mathbb{F}(t-a)\mathbb{I}(t-a)da\right \|_{1}\\
&\leq \sum_{j=0}^\infty\int_{j\omega}^{(j+1)\omega}\|Y(t,t-a)\mathbb{F}(t-a)\mathbb{I}(t-a)\|_{1}da\\
&\leq \sum_{j=0}^\infty\int_{j\omega}^{(j+1)
\omega}KK_1e^{-\kappa a}\|\mathbb{I}(t-a)\|_1da\\
&\leq \omega K K_1\sum_{j=0}^\infty e^{-\kappa\omega j}\cdot\|\mathbb{I}\|_c
\end{split}
\end{eqnarray*}
by \eqref{Lineq}. Hence,
\begin{equation*}
  \|\mathcal{L}\mathbb{I}(t)\|_{c}=\max_{t\in[0,\omega]}\|\mathcal{L}\mathbb{I}(t)\|_{1}\leq \omega K K_1\sum_{j=0}^\infty e^{-\kappa\omega j}\cdot\|\mathbb{I}\|_c,
\end{equation*}
which implies that $\mathcal{L}$ is continuous and uniformly bounded since $\sum_{j=0}^\infty e^{-\kappa\omega j}$ is convergent.

In the following we prove the compactness of $\mathcal{L}$. We first claim that $\mathcal{L}\mathbb{I}(t)$ is equicontinuous.
Consider $\mathbb{I}(t)\in \mathbb{C}_{\omega}$  and $\forall t_1,t_2\in [0,\omega]$ with $t_1<t_2$. Then
\begin{eqnarray*}
\begin{split}
&\|\mathcal{L}\mathbb{I}(t_2)-\mathcal{L}\mathbb{I}(t_1)\|_1=\left\|\int_{-\infty}^{t_2}Y(t_2,s)\mathbb{F}(s)\mathbb{I}(s)ds-
\int_{-\infty}^{t_1}Y(t_1,s)\mathbb{F}(s)\mathbb{I}(s)ds\right \|_1\\
&=\left \|\int_{-\infty}^{t_2}(Y(t_2,s)-Y(t_1,s))\mathbb{F}(s)\mathbb{I}(s)ds
+\int_{t_1}^{t_2}Y(t_1,s)\mathbb{F}(s)\mathbb{I}(s)ds\right \|_1\\
&\leq \int_{-\infty}^{t_2}\|Y(t_2,s)-Y(t_1,s)\|_1\|\mathbb{F}(s)\|_1\|\mathbb{I}(s)\|_1ds+\int_{t_1}^{t_2}\|Y(t_1,s)\|_1\|\mathbb{F}(s)\|_1\|\mathbb{I}(s)\|_1ds\\
&\leq \int_{-\infty}^{\omega}\|Y(t_2,s)-Y(t_1,s)\|_1\|\mathbb{F}(s)\|_1\|\mathbb{I}(s)\|_1ds+\int_{t_1}^{t_2}Ke^{-\kappa (t_1-s)}
\|\mathbb{F}(s)\|_1\|\mathbb{I}(s)\|_1ds\\
&\leq\|e^{-Vt_2}-e^{-Vt_1}\|_1\sum_{i=-\infty}^0\int_{i\omega}^{(i+1)\omega}K_1\|e^{Vs}\|_1\|\mathbb{I}(s)\|_1ds+\int_{t_1}^{t_2}Ke^{-\kappa (t_1-s)}
K_1\|\mathbb{I}(s)\|_1ds\\
&\leq\sum_{i=-\infty}^0e^{\tilde{d}_1(i+1)\omega}\cdot K_1\|\mathbb{I}\|_c\|e^{-Vt_2}-e^{-Vt_1}\|_1+KK_1e^{\kappa \omega }\|\mathbb{I}\|_c(t_2-t_1),
\end{split}
\end{eqnarray*}
where $\tilde{d}_1=\max\{d+r_a, d+r_s\}$.

Notice that $\sum_{i=-\infty}^0e^{N(i+1)}$ is convergent and $e^{-Vt}$ is continuous on $[0,\omega]$.
Thus, if $\{\mathbb{I}(t)\}$ is bounded,  for $\forall \epsilon>0$ there exists a $\delta>0$ such that $\|\mathcal{L}\mathbb{I}(t_2)-\mathcal{L}\mathbb{I}(t_1)\|_c<\epsilon$ as $|t_2-t_1|<\delta$. This implies that $\{(\mathcal{L}\mathbb{I})(t)\}$ are equicontinuous. According to Ascoli-Arzela theorem, we know that $\mathcal{L}$ is compact. The proof of this lemma is completed.
\end{proof}

$\mathcal{L}$ is called the next infection operator, and the spectral radius of $\mathcal{L}$ can be defined as the
basic reproduction number (or ratio)
\begin{equation}\label{R_0}
\mathcal{R}_0:= \rho(\mathcal{L})
\end{equation}
of system \eqref{SIRS3}.

Following \cite{Wang2008}, we consider how to calculate $\mathcal{R}_0$ and whether the basic reproduction ratio (or number)
$\mathcal{R}_0$ characterizes the threshold of disease invasion, i.e., the disease-free periodic
solution $(N,0,0)$ of system \eqref{SIRS3} is local asymptotically stable if $\mathcal{R}_0 < 1$ and unstable if $\mathcal{R}_0 > 1$.

It is clear that the disease-free periodic
solution $(N,0,0)$ of system \eqref{SIRS3} is local asymptotically stable if all characteristic multipliers of periodic system \eqref{real_linear} are less than one, and
it is unstable if at least one of characteristic multipliers of periodic system \eqref{real_linear} is greater than one. By straightforward calculation,
we obtain that the characteristic multipliers of periodic system \eqref{real_linear} consist of $e^{-(d+\sigma)\omega}$ and the eigenvalues of the following
matrix
$$
\Phi_{F-V}(\omega)=e^{(F_2-V)\theta\omega}e^{(F_1-V)(1-\theta)\omega},
$$
where
\[
F_i-V=\left(
  \begin{array}{rr}
 \mu\beta_iN -(d+r_a) & \alpha\mu\beta_iN
\\
 (1-\mu)\beta_iN &  \alpha(1-\mu)\beta_iN -(d+r_s)
  \end{array}
 \right), \ \ i=1,2.
\]
Note that $e^{-(d+\sigma)\omega}<1$ because $d+\sigma>0$. Therefore, all characteristic multipliers of periodic system \eqref{real_linear} are less than one
if and only the largest eigenvalue of $\Phi_{F-V}(\omega)$, denoted by $\rho (\Phi_{F-V}(\omega))$, is less than one (i.e. $\rho (\Phi_{F-V}(\omega))<1$), and at least one of characteristic multipliers of periodic system \eqref{real_linear} is greater than one if and only if $\rho (\Phi_{F-V}(\omega))>1$, here $\rho (\Phi_{F-V}(\omega))$ is called
{\it the spectral radius} of matrix $\Phi_{F-V}(\omega)$.

\smallskip

On the other hand, it is easy to check that all assumptions (A2)-(A7) in \cite{Wang2008} are valid for system \eqref{real_linear} except the assumption (A1).
Using the notations in \cite{Wang2008}, we
define a matrix $V_{\varepsilon}=V-\varepsilon P$, here $P=\left( {\begin{array}{cc}
   1 & 1 \\       1 & 1      \end{array} } \right)$  and $\varepsilon$ is a very small positive number.
   Thus, $-V_{\varepsilon}$ is cooperative and irreducible for each $t\in \mathbb R$. Let $Y_{\varepsilon} (t,s)$ be the evolution operator of
   the linear system \eqref{v2eq} with $V$ replaced by $V_{\varepsilon}$. For some small $\varepsilon_0$, as $\varepsilon\in [0,\ \varepsilon_0)$
   we can define the linear operator
   ${\mathcal L}_{\varepsilon}$ by replacing $Y (t,s)$ in \eqref{operate} with $Y_{\varepsilon} (t,s)$ such that the operator
   ${\mathcal L}_{\varepsilon}$ is positive, continuous and compact on $\mathbb{C}_{\omega}$. Let $\mathcal{R}_0^{\varepsilon}:= \rho(\mathcal{L}_{\varepsilon})$
   for $\varepsilon\in [0,\ \varepsilon_0)$.

   By proof of Theorem \ref{existenceUni}, we know that the  solutions of the following system
   \begin{equation}\label{linearEq2}
   \frac{dx}{dt}=({\mathbb F}(t)-V_{\varepsilon})x
   \end{equation}
   are continuous  with respect to all parameters. Thus,
   \[
   \lim_{\varepsilon\to 0}\Phi_{F-V_{\varepsilon}}(\omega)=\Phi_{F-V}(\omega),
   \]
where $\Phi_{F-V_{\varepsilon}}(\omega)$ is the monodromy matrix of system \eqref{linearEq2}, and $\Phi_{F-V}(\omega)$ is the monodromy matrix of system \eqref{linearEq2}
as $\varepsilon=0$.

According to the continuity of the spectrum of matrices, we have
\[
\lim_{\varepsilon\to 0}\rho(\Phi_{F-V_{\varepsilon}}(\omega))=\rho(\Phi_{F-V}(\omega)).
\]
From Lemma \ref{operatorC}, we use the similar arguments in \cite{Wang2008} to the two linear operator
   ${\mathcal L}_{\varepsilon}$ and  ${\mathcal L}$, and obtain
   \[
   \lim_{\varepsilon\to 0}\mathcal{R}_0^{\varepsilon}=\mathcal{R}_0.
   \]

We now  easily follow the arguments in  \cite{Wang2008} to characterize $\mathcal{R}_0$. Let $W_{\lambda}(t, s), t\geq s$ be the fundamental
solution matrix of the following linear periodic system
\begin{equation*}\label{test}
\frac{dw}{dt}=\left(-V+\frac{\mathbb F(t)}{\lambda}\right)w,
\end{equation*}
where the parameter $\lambda\in (0, +\infty)$.  Consider an equation of $\lambda$
\begin{equation}\label{need}
\rho(W_{\lambda}(\omega, 0))=1.
\end{equation}
Then $\mathcal{R}_0$ can be calculated as follows.
\begin{theorem}\label{R0Characterize}
\begin{itemize}
\item[(i)] If equation \eqref{need} has a  solution $\lambda_0>0$, then $\lambda_0$ is an eigenvalue of $\mathcal{L}$, which implies that $\mathcal{R}_0>0$;
\item[(ii)] If $\mathcal{R}_0>0$, then $\lambda=\mathcal{R}_0$ is the only solution of equation \eqref{need};
\item[(iii)] $\mathcal{R}_0=0$ if and only if $\rho(W_{\lambda}(\omega, 0))<1$ for all positive $\lambda$.
\end{itemize}
\end{theorem}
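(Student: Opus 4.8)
The plan is to follow the framework of Wang and Zhao \cite{Wang2008} adapted to our piecewise-continuous periodic setting, but to exploit the regularizing matrix $V_\varepsilon = V - \varepsilon P$ already introduced above so that the irreducibility assumption (A1) of \cite{Wang2008} is recovered. First I would fix $\lambda > 0$ and introduce, in parallel with $W_\lambda(t,s)$, the perturbed fundamental matrix $W_{\lambda,\varepsilon}(t,s)$ solving $dw/dt = \bigl(-V_\varepsilon + \mathbb{F}(t)/\lambda\bigr)w$, together with the operator $\mathcal{L}_\varepsilon$ obtained by replacing $Y(t,s)$ with $Y_\varepsilon(t,s)$ in \eqref{operate}. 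For each fixed $\varepsilon \in (0,\varepsilon_0)$ the matrix $-V_\varepsilon$ is cooperative \emph{and} irreducible, so all of (A1)--(A7) of \cite{Wang2008} hold for the system $dx/dt = (\mathbb{F}(t) - V_\varepsilon)x$; hence their Theorem~2.1 applies verbatim and gives: if $\rho(W_{\lambda,\varepsilon}(\omega,0)) = 1$ for some $\lambda > 0$ then that $\lambda$ is an eigenvalue of $\mathcal{L}_\varepsilon$ (so $\mathcal{R}_0^\varepsilon > 0$); if $\mathcal{R}_0^\varepsilon > 0$ then $\lambda = \mathcal{R}_0^\varepsilon$ is the unique root of $\rho(W_{\lambda,\varepsilon}(\omega,0)) = 1$; and $\mathcal{R}_0^\varepsilon = 0$ iff $\rho(W_{\lambda,\varepsilon}(\omega,0)) < 1$ for every $\lambda > 0$.

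Next I would pass to the limit $\varepsilon \to 0$. We already have $\lim_{\varepsilon\to 0}\mathcal{R}_0^\varepsilon = \mathcal{R}_0$ and $\lim_{\varepsilon\to 0}\rho(\Phi_{F-V_\varepsilon}(\omega)) = \rho(\Phi_{F-V}(\omega))$ from the continuity of solutions in parameters (proof of Theorem~\ref{existenceUni}) plus continuity of the spectrum. The same continuity argument applied to \eqref{linearEq2} with $\mathbb{F}(t)$ scaled by $1/\lambda$ yields $\lim_{\varepsilon\to 0} W_{\lambda,\varepsilon}(\omega,0) = W_\lambda(\omega,0)$ and hence $\lim_{\varepsilon\to 0}\rho(W_{\lambda,\varepsilon}(\omega,0)) = \rho(W_\lambda(\omega,0))$ for each fixed $\lambda$. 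A key monotonicity observation does the structural work: since $\mathbb{F}(t)$ is nonnegative and $-V$ is cooperative, the evolution operator of $-V + \mathbb{F}(t)/\lambda$ is, by the Kamke/Müller comparison theorem, entrywise nonincreasing in $\lambda$ on $(0,\infty)$; consequently $\lambda \mapsto \rho(W_\lambda(\omega,0))$ is nonincreasing and continuous (and strictly decreasing wherever it is positive, by irreducibility of the $\varepsilon$-regularized flow combined with Perron--Frobenius applied to $W_{\lambda,\varepsilon}(\omega,0)$ and then letting $\varepsilon \to 0$). This gives uniqueness of the root in (ii) once existence is known, and it gives the characterization in (iii): $\mathcal{R}_0 = 0$ forces $\rho(W_\lambda(\omega,0)) \le \rho(W_{\lambda'}(\omega,0))$ to stay below $1$ for all $\lambda$, and conversely if $\rho(W_\lambda(\omega,0)) < 1$ for all $\lambda$ then no $\lambda$ can be an eigenvalue of $\mathcal{L}$, so $\rho(\mathcal{L}) = \mathcal{R}_0 = 0$.

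For part (i), assuming $\rho(W_{\lambda_0}(\omega,0)) = 1$, I would show $\mathcal{L}\psi = \lambda_0 \psi$ for a suitable $\psi \in \mathbb{C}_\omega^+$: let $v$ be the Perron eigenvector of $W_{\lambda_0}(\omega,0)$ for eigenvalue $1$, set $\psi(t) := W_{\lambda_0}(t,0)v$ extended $\omega$-periodically — well-defined precisely because $W_{\lambda_0}(\omega,0)v = v$ — and then verify by the variation-of-constants identity satisfied by $W_{\lambda_0}$ that $\psi$ solves the renewal equation $\psi(t) = \frac{1}{\lambda_0}\int_0^\infty Y(t,t-a)\mathbb{F}(t-a)\psi(t-a)\,da = \frac{1}{\lambda_0}(\mathcal{L}\psi)(t)$; since $\mathcal{L}$ is positive, continuous and compact (Lemma~\ref{operatorC}) and $\psi \ge 0$, $\psi \ne 0$, this exhibits $\lambda_0 > 0$ as an eigenvalue, whence $\mathcal{R}_0 = \rho(\mathcal{L}) \ge \lambda_0 > 0$. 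Part (ii) then follows by combining (i)'s identity applied at $\lambda = \mathcal{R}_0$ (using that $\mathcal{R}_0 > 0$ is an eigenvalue of the compact positive operator $\mathcal{L}$, with a nonnegative eigenfunction, which forces $\rho(W_{\mathcal{R}_0}(\omega,0)) = 1$) with the strict monotonicity of $\lambda \mapsto \rho(W_\lambda(\omega,0))$ on the region where it exceeds or equals $1$. I expect the main obstacle to be the last point: proving that $\rho(W_\lambda(\omega,0)) = 1$ has \emph{no spurious roots} when $\mathcal{R}_0 > 0$, i.e. the strictness of the monotonicity — the discontinuity of $\mathbb{F}(t)$ means one cannot appeal directly to a smooth-dependence argument, so the $\varepsilon$-regularization must be invoked carefully, establishing strict monotonicity for each $\varepsilon > 0$ via irreducibility and then checking that the limiting inequality $\rho(W_{\lambda_1}(\omega,0)) \ge \rho(W_{\lambda_2}(\omega,0))$ for $\lambda_1 < \lambda_2$ cannot degenerate to equality along a whole subinterval where the value is $\ge 1$, which would contradict the fact that $\mathcal{L}$ has a \emph{one-dimensional} leading eigenspace.
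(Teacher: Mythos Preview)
Your proposal is correct and follows essentially the same route the paper indicates: the paper does not spell out a proof at all --- it simply says ``we easily follow the arguments in \cite{Wang2008}'' after setting up the $\varepsilon$-regularization $V_\varepsilon = V - \varepsilon P$ to recover irreducibility, and then writes ``To save space, the proofs of the above theorems are omitted.'' Your sketch (apply Wang--Zhao's Theorem~2.1 at each $\varepsilon>0$, then pass to the limit using the continuity facts $\mathcal{R}_0^\varepsilon\to\mathcal{R}_0$ and $\rho(W_{\lambda,\varepsilon}(\omega,0))\to\rho(W_\lambda(\omega,0))$ already established) is precisely the argument the paper gestures at, and you have supplied more detail than the paper itself does, including the explicit eigenfunction construction for part (i) and the monotonicity-in-$\lambda$ justification for uniqueness in part (ii).
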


Note that $\rho(W_1(\omega,0))=\rho(\Phi_{F-V}(\omega))$.
Using similar arguments in  \cite{Wang2008}, we can prove that the basic reproduction ratio (or number)
$\mathcal{R}_0$ can characterize  the threshold of disease invasion.
\begin{theorem}\label{threshold}
\begin{itemize}
\item[(i)]$\mathcal{R}_0>1$ if and only if $\rho (\Phi_{F-V}(\omega))>1$;
 \item[(ii)] $\mathcal{R}_0=1$ if and only if $\rho (\Phi_{F-V}(\omega))=1$;
 \item[(iii)] $\mathcal{R}_0<1$ if and only if $\rho (\Phi_{F-V}(\omega))<1$.
 \end{itemize}
 Hence, the disease-free periodic
solution $(N,0,0)$ of system \eqref{SIRS3} is local asymptotically stable if $\mathcal{R}_0<1$, and it is unstable if $\mathcal{R}_0>1$.
\end{theorem}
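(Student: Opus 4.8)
The plan is to read off all three equivalences directly from Theorem~\ref{R0Characterize}, using only elementary continuity and monotonicity properties of the scalar function
\[
g(\lambda):=\rho\bigl(W_{\lambda}(\omega,0)\bigr),\qquad \lambda\in(0,+\infty),
\]
and then to combine parts (i) and (iii) with the characteristic-multiplier computation made just before the statement to obtain the stability assertion.

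The substantive step will be establishing three facts: $g$ is continuous, $g$ is nonincreasing, and $\lim_{\lambda\to+\infty}g(\lambda)<1$. To do this I would first make $W_{\lambda}(\omega,0)$ explicit: integrating $\dot w=\bigl(-V+\mathbb F(t)/\lambda\bigr)w$ piecewise over $J_1$ and $J_2$, just as was done for $\Phi_{F-V}(\omega)$, yields
\[
W_{\lambda}(\omega,0)=e^{(F_2/\lambda-V)\theta\omega}\,e^{(F_1/\lambda-V)(1-\theta)\omega}.
\]
Each $F_i/\lambda-V$ is a Metzler matrix, so each factor is an entrywise nonnegative matrix depending continuously on $\lambda$; hence $W_{\lambda}(\omega,0)$ is a nonnegative matrix and $g$ is continuous on $(0,+\infty)$. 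If $0<\lambda_1\le\lambda_2$, then $F_i/\lambda_1\ge F_i/\lambda_2$ entrywise, and since $A\mapsto e^{At}$ is entrywise nondecreasing on the set of Metzler matrices (from the Duhamel identity $e^{Bt}-e^{At}=\int_0^t e^{A(t-s)}(B-A)e^{Bs}\,ds\ge 0$ whenever $B-A\ge 0$ and $A$ is Metzler), the factors for $\lambda_1$ dominate those for $\lambda_2$; since multiplication of nonnegative matrices preserves the entrywise order, $W_{\lambda_1}(\omega,0)\ge W_{\lambda_2}(\omega,0)\ge 0$, and therefore $g(\lambda_1)\ge g(\lambda_2)$ by monotonicity of the Perron root on nonnegative matrices. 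Finally $W_{\lambda}(\omega,0)\to e^{-V\omega}$ as $\lambda\to+\infty$, so $g(\lambda)\to\rho(e^{-V\omega})=\max\{e^{-(d+r_a)\omega},e^{-(d+r_s)\omega}\}<1$ because $d,r_a,r_s>0$. This comparison is exactly the cooperative/Metzler ingredient from \cite{Wang2008} that also underlies Theorem~\ref{R0Characterize} (and is the reason the $V_{\varepsilon}$-regularization was introduced above).

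With these properties of $g$ and the identity $g(1)=\rho(\Phi_{F-V}(\omega))$ already noted, the rest is bookkeeping. For (i): if $\rho(\Phi_{F-V}(\omega))>1$, then $g(1)>1$ while $g(\lambda)<1$ for $\lambda$ large, so the intermediate value theorem gives $\lambda_0>1$ with $g(\lambda_0)=1$; Theorem~\ref{R0Characterize}(i) then forces $\mathcal{R}_0>0$ and (ii) gives $\mathcal{R}_0=\lambda_0>1$. Conversely, if $\mathcal{R}_0>1$, then $\mathcal{R}_0>0$, so $g(\mathcal{R}_0)=1$ by Theorem~\ref{R0Characterize}(ii); monotonicity gives $g(1)\ge g(\mathcal{R}_0)=1$, and $g(1)=1$ would make $\lambda=1$ a solution of \eqref{need}, forcing $\mathcal{R}_0=1$ by the uniqueness in (ii), a contradiction; hence $g(1)>1$. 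Part (ii) is analogous: $\mathcal{R}_0=1$ gives $g(1)=g(\mathcal{R}_0)=1$ by Theorem~\ref{R0Characterize}(ii), and $g(1)=1$ gives, via Theorem~\ref{R0Characterize}(i)--(ii), that $\lambda=1$ is the unique solution of \eqref{need}, hence $\mathcal{R}_0=1$. Part (iii) is then forced, since $\mathcal{R}_0$ and $g(1)=\rho(\Phi_{F-V}(\omega))$ are nonnegative reals and the strict-greater and equality cases have already been matched.

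The stability assertion is then immediate from the paragraph preceding the statement: the characteristic multipliers of \eqref{real_linear} are $e^{-(d+\sigma)\omega}<1$ together with the eigenvalues of $\Phi_{F-V}(\omega)$, so $(N,0,0)$ is locally asymptotically stable when $\rho(\Phi_{F-V}(\omega))<1$ and unstable when $\rho(\Phi_{F-V}(\omega))>1$; by (iii) and (i) these correspond exactly to $\mathcal{R}_0<1$ and $\mathcal{R}_0>1$. I expect the only real obstacle to be the monotonicity of $g$ — transparent here thanks to the explicit two-factor form of $W_{\lambda}(\omega,0)$, but in the general framework of \cite{Wang2008} this is precisely where cooperativity and irreducibility do the work.
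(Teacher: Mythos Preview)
Your argument is correct. The paper itself omits the proof entirely, writing only that ``using similar arguments in \cite{Wang2008}'' the result follows; so there is no line-by-line comparison to make. What you have done is to spell out that Wang--Zhao argument in the specific piecewise-constant setting of this paper, and in fact your version is slightly more elementary than the general one: because you have the explicit product formula $W_{\lambda}(\omega,0)=e^{(F_2/\lambda-V)\theta\omega}e^{(F_1/\lambda-V)(1-\theta)\omega}$, the monotonicity of $g(\lambda)=\rho(W_{\lambda}(\omega,0))$ follows from the Duhamel comparison and entrywise monotonicity of the Perron root, without invoking irreducibility. The paper's $V_{\varepsilon}$-regularization is used upstream to import Theorem~\ref{R0Characterize} from \cite{Wang2008}; once that theorem is in hand, your direct monotonicity/IVT argument for the present statement avoids any further use of it. The stability clause is, as you say, immediate from the characteristic-multiplier computation already recorded before the theorem.
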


To save space, the proofs of the above theorems are omitted. From Theorem \ref{threshold}, we can see that $\mathcal{R}_0$ is a threshold parameter
for local stability of the disease-free periodic
solution $(N,0,0)$.   We next show $\mathcal{R}_0$ is also a threshold parameter for dynamics of system \eqref{SIRS3} in $\mathcal{D}_0$.
\medskip

\begin{theorem}\label{th-E02}
When $\mathcal{R}_{0}<1$, solutions
$(S(t),I_{a}(t),I_{s}(t))$ of  system  (\ref{SIRS3}) with initial points  in $\mathcal{D}_0$
satisfies
$$
\lim_{t \to +\infty}(S(t),I_{a}(t),I_{s}(t))=(N, 0, 0).
$$
And the disease-free periodic solution $(N,0,0)$ of system  (\ref{SIRS3}) is global asymptotically stable in $\mathcal{D}_0$.
\end{theorem}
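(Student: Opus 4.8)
The plan is to combine the local asymptotic stability of $E_0=(N,0,0)$ already provided by Theorem~\ref{threshold} with a comparison argument giving global attractivity of $E_0$ in $\mathcal D_0$; this is cleaner than analysing the period map $\mathcal P$ directly. Throughout, fix a solution $(S(t),I_a(t),I_s(t))=\varphi(t,P_0)$ with $P_0\in\mathcal D_0$, which by Theorem~\ref{existenceUni} stays in $\mathcal D_0$, so that $0\le S(t)\le N$ and $I_a(t),I_s(t)\ge 0$ for all $t\ge 0$.

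\emph{Step 1: the infective components vanish.} Put $\mathbb I(t)=(I_a(t),I_s(t))^T$. Since $S(t)\le N$ and $I_a+\alpha I_s\ge 0$, the last two equations of \eqref{SIRS3} yield the componentwise differential inequality
$$
\dot{\mathbb I}(t)\le (\mathbb F(t)-V)\mathbb I(t),\qquad t\ge 0 .
$$
The matrix $\mathbb F(t)-V$ is cooperative, since its off-diagonal entries $\alpha\mu\beta(t)N$ and $(1-\mu)\beta(t)N$ are nonnegative, so the comparison theorem for cooperative systems — applied on each continuity interval $[s_m,t_m]$, $[t_m,s_{m+1}]$ of the switching function $\beta(t)$ and glued at the switching instants by continuity of solutions — gives $0\le \mathbb I(t)\le z(t)$, where $z(t)$ solves the linear $\omega$-periodic system $\dot z=(\mathbb F(t)-V)z$ with $z(0)=\mathbb I(0)$. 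Because $\mathcal R_0<1$, Theorem~\ref{threshold} gives $\rho(\Phi_{F-V}(\omega))<1$, and since $\Phi_{F-V}(\omega)=e^{(F_2-V)\theta\omega}e^{(F_1-V)(1-\theta)\omega}$ is the monodromy matrix of that linear system, Floquet theory shows $z(t)\to 0$ exponentially. Hence $(I_a(t),I_s(t))\to(0,0)$ as $t\to+\infty$.

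\emph{Step 2: the susceptible component tends to $N$.} Feeding Step~1 into the first equation of \eqref{SIRS3}: given $\varepsilon>0$, using $0\le S\le N$, $\beta(t)\le\beta_2$ and $I_a,I_s\to0$, there is $T>0$ with $\beta(t)S(t)(I_a(t)+\alpha I_s(t))+\sigma(I_a(t)+I_s(t))<\varepsilon$ for $t\ge T$. Then
$$
(d+\sigma)(N-S)-\varepsilon\ \le\ \dot S\ \le\ (d+\sigma)(N-S)+\varepsilon,\qquad t\ge T,
$$
and comparison with the scalar equations $\dot u=(d+\sigma)(N-u)\pm\varepsilon$ gives
$$
N-\tfrac{\varepsilon}{d+\sigma}\ \le\ \liminf_{t\to+\infty}S(t)\ \le\ \limsup_{t\to+\infty}S(t)\ \le\ N+\tfrac{\varepsilon}{d+\sigma}.
$$
As $\varepsilon>0$ is arbitrary, $S(t)\to N$. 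Thus $\lim_{t\to+\infty}(S(t),I_a(t),I_s(t))=(N,0,0)$ for every $P_0\in\mathcal D_0$, which is the claimed global attractivity; combined with the local asymptotic stability of $(N,0,0)$ from Theorem~\ref{threshold}, the disease-free periodic solution is globally asymptotically stable in $\mathcal D_0$.

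The scalar comparison in Step~2 and the Floquet decay are routine. The point requiring the most care is the vector comparison in Step~1: one must verify that the majorant system is genuinely cooperative, so that the inequality $\dot{\mathbb I}\le(\mathbb F(t)-V)\mathbb I$ can actually be integrated to $\mathbb I(t)\le z(t)$, and that the jump discontinuity of $\beta(t)$ does not invalidate the comparison — this is handled by working on each continuity interval of $\beta$ and using that the endpoint value of one interval is the initial value for the next, exactly as in the construction \eqref{solution}.
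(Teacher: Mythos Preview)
Your proof is correct and follows essentially the same approach as the paper: a cooperative comparison of $(I_a,I_s)$ against the linear periodic majorant $\dot z=(\mathbb F(t)-V)z$, followed by a scalar comparison for $S$. The only cosmetic difference is in Step~2: the paper gets the upper bound on $S$ directly from the invariance $S(t)\le N$ in $\mathcal D_0$ rather than from a two-sided differential inequality, but both arguments are equivalent.
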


\begin{proof}
In the invariant  pyramid $\mathcal{D}_0$ as shown in \eqref{D}, we consider a subsystem  by the last two equations of system  (\ref{SIRS3})
\begin{equation}
\begin{cases}
\label{compare-smaller}
\dot{I_{a}}(t)&=\mu\beta(t)S(I_{a}+\alpha I_{s})-(d+r_{a})I_{a}
\\
&\le \mu\beta(t) N(I_{a}+\alpha I_{s})-(d+r_{a})I_{a},
\\
\dot{I_{s}}(t)&= (1-\mu)\beta(t)S(I_{a}+\alpha I_{s})-(d+r_{s})I_{s}
\\
&\le (1-\mu)\beta(t)N(I_{a}+\alpha I_{s})-(d+r_{s})I_{s}.
\end{cases}
\end{equation}
Thus, the auxiliary  system of  \eqref{compare-smaller} is
 \begin{eqnarray}
\label{compare-bigger}
\begin{cases}
\dot{I_{a}}(t)=\mu\beta(t) N(I_{a}+\alpha I_{s})-(d+r_{a})I_{a},
\\
\dot{I_{s}}(t)=(1-\mu)\beta(t)N(I_{a}+\alpha I_{s})-(d+r_{s})I_{s},
\end{cases}
\end{eqnarray}
which is a periodic linear discontinuous system with period $\omega$. The periodic map associated with system \eqref{compare-bigger} is defined by
$\Phi_{F-V}(\omega)$, which is a linear continuous map.

When $\mathcal{R}_{0}<1$, we have  $\rho(\Phi_{F-V}(\omega))<1$, which implies that $(0,0)$ is a global asymptotically stable solution of system \eqref{compare-bigger}.

Note that systems \eqref{compare-smaller} and \eqref{compare-bigger} are cooperative.
Using the similar arguments in \cite{Smi1995}, we can prove the comparison principle holds. Hence,
$$
\lim_{t \to +\infty}(I_{a}(t),I_{s}(t))=(0, 0).
$$
So, for arbitrarily small constant $\varepsilon>0$, there exists $T>0$
 such that  $I_{a}(t)+\alpha I_{s}(t)<\varepsilon$ as $t>T$.
From the first equation  of  system  (\ref{SIRS3}),
\begin{equation*}
\begin{split}
\dot{S}&=dN-dS-\beta(t)S(I_{a}+\alpha I_{s})+\sigma(N-S-I_a-I_s)
\\
&> dN-dS-\beta_2S\varepsilon.
\end{split}
\label{SS1}
\end{equation*}
Therefore,  $\liminf_{t\rightarrow+\infty}S(t)\geqslant\frac{dN}{d+\beta_2\varepsilon}$. Let $\varepsilon\to 0$, we have
$$
 \liminf_{t\rightarrow+\infty}S(t)\geqslant N.
$$
On the other hand,   $S(t)\leqslant N$ in $\mathcal{D}_0$, which admits
$$
\lim_{t\rightarrow+\infty}S(t)=N.
$$
In summary, we have $\lim_{t \to +\infty}(S(t),I_{a}(t),I_{s}(t))=(N, 0, 0)$.
Moreover, from Theorem \ref{threshold} we know that $(N,0,0)$ of system \eqref{SIRS3} is global asymptotically stable.
\end{proof}


%
%
%
%
%


In the following, we show that the disease is
uniformly persistent when $\mathcal{R}_0 > 1$.


\begin{theorem}
If $\mathcal{R}_{0}>1$, $0<\mu<1$ and $0<\alpha\beta_1$, then 
there exists a  constant $\delta_{0}>0$ such that every solution
$(S(t),I_{a}(t),I_{s}(t))$ of  system  (\ref{SIRS3}) with initial value  in $\mathcal{D}_0$
satisfies
$$
\liminf_{t \to +\infty}I_{a}(t)\geqslant\delta_{0},\quad \liminf_{t \to +\infty}I_{s}(t)\geqslant\delta_{0}.
$$
\end{theorem}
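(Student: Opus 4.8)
The plan is to apply the standard theory of uniform persistence for periodic semiflows (as developed by Zhao) to the Poincaré (period) map $\mathcal{P}$ on $\mathcal{D}_0$ defined in \eqref{poincaremap}. First I would set up the framework: let $X_0 = \{(S,I_a,I_s)\in\mathcal{D}_0 : I_a>0,\ I_s>0\}$ and $\partial X_0 = \mathcal{D}_0\setminus X_0$. One checks that $X_0$ is positively invariant under $\mathcal{P}$ — this uses the structure of system \eqref{SIRS3}: if $I_a(t_0)>0$ and $I_s(t_0)>0$ then, because $0<\mu<1$ and $\alpha\beta_1>0$ guarantee that the transmission terms $\mu\beta(t)S(I_a+\alpha I_s)$ and $(1-\mu)\beta(t)S(I_a+\alpha I_s)$ are genuinely present, both $I_a$ and $I_s$ stay strictly positive for all later $t$ (as long as $S$ does not vanish, which it does not, since $S=0$ forces $\dot S = (d+\sigma)N - \sigma(I_a+I_s) > 0$ on the relevant face). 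I would also need the dynamics on the boundary: the maximal invariant set $M_\partial$ of $\mathcal{P}$ in $\partial X_0$ should consist precisely of the disease-free equilibrium $E_0=(N,0,0)$. This follows because on $\partial X_0$ at least one of $I_a, I_s$ vanishes, and then the coupling through $\mu,1-\mu$ (both positive) forces the other to decay and the orbit to converge to $E_0$; so $M_\partial = \{E_0\}$ and it is isolated and acyclic in $\partial X_0$.

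Next comes the key estimate — showing $E_0$ is a uniform weak repeller for $X_0$: there exists $\eta>0$ such that $\limsup_{n\to\infty}\|\mathcal{P}^n(P_0) - E_0\| \ge \eta$ for every $P_0\in X_0$. The mechanism is the instability of $E_0$ guaranteed by $\mathcal{R}_0>1$, which by Theorem \ref{threshold} is equivalent to $\rho(\Phi_{F-V}(\omega))>1$. Concretely, I would argue by contradiction: if some orbit stays within $\eta$ of $E_0$ for all large time, then $S(t) > N-\xi$ for a small $\xi$ (controlled by $\eta$) eventually, so the $(I_a,I_s)$ subsystem dominates the linear periodic discontinuous system
\begin{equation*}
\dot{I_a} = \mu\beta(t)(N-\xi)(I_a+\alpha I_s) - (d+r_a)I_a,\quad
\dot{I_s} = (1-\mu)\beta(t)(N-\xi)(I_a+\alpha I_s) - (d+r_s)I_s,
\end{equation*}
whose period map is $\Phi_{F_\xi - V}(\omega)$, a continuous perturbation of $\Phi_{F-V}(\omega)$. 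By continuity of the spectral radius (the same continuity argument used in the excerpt for $\mathcal{R}_0^\varepsilon$), for $\xi$ small enough $\rho(\Phi_{F_\xi-V}(\omega))>1$, so there is a positive eigenvector along which solutions of the minorizing system grow geometrically; by the comparison principle for cooperative systems (the same one invoked in Theorem \ref{th-E02}, cf. \cite{Smi1995}) the actual $(I_a(t),I_s(t))$ would grow without bound, contradicting that the orbit stays near $E_0$. This yields the uniform weak repeller property.

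Finally, I would invoke the general theorem on uniform persistence for discrete dynamical systems: $\mathcal{P}$ is a continuous map on the compact set $\mathcal{D}_0$ (continuity by Theorem \ref{existenceUni}), $X_0$ is open in $\mathcal{D}_0$ and positively invariant, $M_\partial=\{E_0\}$ is isolated and acyclic, and $E_0$ is a uniform weak repeller for $X_0$; hence $\mathcal{P}$ is uniformly persistent with respect to $(X_0,\partial X_0)$, i.e. there is $\delta_0>0$ with $\liminf_{n\to\infty}\operatorname{dist}(\mathcal{P}^n(P_0),\partial X_0)\ge\delta_0$ for all $P_0\in X_0$. Translating this back to the continuous-time flow $\varphi(t,P_0)$ over the compact interval $[0,\omega]$ (using continuity and boundedness of the vector field to pass from a bound on the discrete orbit to a possibly smaller uniform bound $\delta_0$ for all $t$), and noting any orbit starting in $\partial X_0\setminus M_\partial$ immediately enters $X_0$, gives $\liminf_{t\to\infty}I_a(t)\ge\delta_0$ and $\liminf_{t\to\infty}I_s(t)\ge\delta_0$. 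The main obstacle I anticipate is the weak-repeller estimate: handling the discontinuity of $\beta(t)$ in the comparison argument requires care, but since everything reduces to the monodromy matrix $\Phi_{F-V}(\omega)=e^{(F_2-V)\theta\omega}e^{(F_1-V)(1-\theta)\omega}$ over one full period, the discontinuity causes no real trouble — one works period-by-period, exactly as in the construction of $\varphi$ in \eqref{solution}.
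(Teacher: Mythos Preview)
Your approach is the paper's: apply Zhao's uniform-persistence machinery to the period map $\mathcal{P}$ on $\mathcal{D}_0$, identify the boundary dynamics, check that $E_0$ repels $X_0$ when $\mathcal{R}_0>1$, and invoke \cite[Theorems 1.3.1 and 3.1.1]{Zhao2003}. In fact you give more detail than the paper on the key step: the paper simply asserts $W^s(E_0)\cap X_0=\emptyset$ from $\mathcal{R}_0>1$, whereas you sketch the standard comparison argument via the perturbed monodromy $\Phi_{F_\xi-V}(\omega)$ and continuity of the spectral radius, which is exactly what is needed.

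One correction on the boundary analysis. Your sentence ``the coupling through $\mu,1-\mu$ forces the other to decay and the orbit to converge to $E_0$'' is backwards: if $I_a(0)=0$, $I_s(0)>0$, $S(0)>0$, then $\dot I_a(0)=\mu\alpha\beta(0)S(0)I_s(0)>0$, so the orbit \emph{leaves} $\partial X_0$ and enters $X_0$ rather than decaying along the boundary. Accordingly the paper takes $M_\partial=\{(S,0,0):0\le S\le N\}$ (the full $S$-axis segment, i.e.\ the set of points whose forward $\mathcal{P}$-orbit stays in $\partial X_0$), and then notes that $E_0$ is its unique fixed point and global attractor within $M_\partial$; the acyclic covering is $\{E_0\}$. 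Your later remark that orbits from $\partial X_0\setminus M_\partial$ ``immediately enter $X_0$'' shows you already have the correct picture, so this is a wording slip, not a gap.
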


\begin{proof}
Since system \eqref{SIRS3} is $\omega$-periodic with respect to $t$ in $\mathbb{R}_+\times \mathcal{D}_0$, it suffices to investigate the dynamics of its associated period map $\mathcal{P}$ defined by \eqref{poincaremap} on $\mathcal{D}_0$ for the dynamics of system \eqref{SIRS3}, where the map $\mathcal{P}$ is continuous. Clearly, $\mathcal{P}(\mathcal{D}_0)\subset \mathcal{D}_0$.
Define
$$
X_{0}=\{(S,I_a,I_s)\in\mathcal{D}_0: I_a>0,I_s>0\},\ \partial{X_{0}}=\mathcal{D}_0 \backslash X_{0}.
$$

Set
$$
M_{\partial}=\{P_0\in \partial X_{0} :\mathcal{P}^k(P_0)\in\partial X_{0},  \forall k\ge 0 \},
$$
which is a positive invariant set of $\mathcal{P}$ in $\partial X_{0}$. We claim
\begin{equation}
\label{M-partial}
M_{\partial}=\{(S,0,0):0\leqslant S\leqslant N\}.
\end{equation}
In fact,  $\{(S,0,0):0\leqslant S\leqslant N\}\subset M_{\partial}$ by \eqref{poincaremap}.
On the other hand, for any
$P_0\in\partial{X_{0}}\setminus \{(S,0,0):0\leqslant S\leqslant N\}$, that is either
 $I_{a0}=0, I_{s0}>0, S_0\ge0$ or $I_{a0}>0,I_{s0}=0, S_0\ge0$.
In the case  $I_{a0}=0, I_{s0}>0, S_0>0$ (resp. $I_{a0}>0, I_{s0}=0,  S_0>0$), we calculate by the last two equations
of system \eqref{SIRS3} and obtain that
\begin{eqnarray*}
I_{a}'(0)=\mu\alpha\beta(0) S(0)I_{s}(0)>0\ ({\rm resp.} \ ~I_{s}'(0)=(1-\mu)\beta(0) S(0)I_{a}(0)>0),
\end{eqnarray*}
if $0<\mu<1$ and $0<\alpha\beta_1$. This implies that $\mathcal{P}^{k_0}(P_0)\not\in\partial{X_{0}}\setminus \{(S,0,0):0\leqslant S\leqslant N\}$ for some $k_0\ge0$ since the subsystem by the last two equations
of system \eqref{SIRS3} is cooperative.  If $S(0)=0, I_{a0}=0, I_{s0}>0$ (or $S(0)=0, I_{a0}>0,I_{s0}=0$), then
$S'(0)=(d+\sigma)N-\sigma I_s(0)>0$ (or $S'(0)=(d+\sigma)N-\sigma I_a(0)>0$),
which leads that $\mathcal{P}^{k_1}(P_0)\not\in\partial{X_{0}}\setminus \{(S,0,0):0\leqslant S\leqslant N\}$ for some $k_1\ge 0$.
Therefore, \eqref{M-partial} is proved and $M_{\partial}$ is the maximal compact invariant set of $\mathcal{P}$ in $\partial X_{0}$.

Note that $E_0(N, 0, 0)$ is the unique fixed point  of $\mathcal{P}$ in $M_{\partial}$ and it is an attractor of $\mathcal{P}$ in $M_{\partial}$
by the first equation of \eqref{SIRS3}.
Since $\mathcal{R}_{0}>1$, the stable set $W^s(E_0)$ of $E_0$ satisfies that $W^s(E_0) \cap X_0 =\emptyset$.

Applying  \cite[Theorem 1.3.1]{Zhao2003}, we obtain that $\mathcal{P}$ is uniformly persistence with respect to $(X_0, \partial X_0)$.
Moreover, from \cite[Theorem 3.1.1]{Zhao2003}, it can see that the conclusion of this theorem is true. The proof is completed.
\end{proof}


\medskip


\section{Global dynamics of system \eqref{SIRS3} without seasonal force}

In this section, we study  the effects of asymptomatic infection on  dynamics of system \eqref{SIRS3} if there are not seasonal factors, that is,
 $\beta_1=\beta_2=\beta$. Then system \eqref{SIRS3} becomes
\begin{equation}\label{SIR-dim3}
\begin{cases}
\dot{S}=(d+\sigma)(N-S)-\beta S(I_a+\alpha I_s)-\sigma (I_a+I_s),\\
\dot{I_a}=\mu\beta S(I_a+\alpha I_s)-(d+r_a)I_a,\\
\dot{I_s}=(1-\mu)\beta S(I_a+\alpha I_s)-(d+r_s)I_s
\end{cases}
\end{equation}
in the domain $\mathbb{R}_+^3$.

By the formula \eqref{R_0}, we let $\beta_1=\beta_2$ and obtain the basic reproduction number
 $\mathcal{R}_0$ of system \eqref{SIR-dim3} as follows.
 \begin{eqnarray}\label{cons-R0}
\mathcal{R}_0=\beta N \left(  \frac{\mu}{d+r_{a}}+\frac{\alpha(1-\mu)}{d+r_{s}} \right),
\end{eqnarray}
 which is consistent with the number calculated using the approach of basic reproduction number in  \cite{Die1990} and  \cite{Van2002}.

 From the expression \eqref{cons-R0}, we can see that there is still the risks of infectious disease outbreaks due to the existence of asymptomatic infection
 even if all symptomatic infective individuals has been quarantined, that is, $\alpha=0$. This provides an intuitive basis for
  understanding that the asymptomatic infective individuals promote  the evolution of epidemic.

In the following we study dynamics of system \eqref{SIR-dim3}.
By a straightforward calculation, we obtain  the  existence of equilibrium for system \eqref{SIR-dim3}.

\begin{lemma}\label{existen}
\label{L-equils}(Existence of equilibrium) System (\ref{SIR-dim3}) has the following equilibria in $\mathbb{R}_+^3$.
\begin{itemize}
\item[(i)]
If $\mathcal{R}_0\le 1$, then system (\ref{SIR-dim3})  has a unique  equilibrium, which is the disease-free equilibrium $E_0(N,0,0)$.
\item[(ii)] If $\mathcal{R}_0>1$ and $0<\mu<1$, then system (\ref{SIR-dim3})  has two equilibria: the disease-free equilibrium $E_0(N,0,0)$ and the endemic equilibrium $E_1(S^*, I_a^*, I_s^*)$    in the interior of  $\mathcal{D}_0$,
where
$S^* = \frac{N}{\mathcal{R}_0},
I_a^* = \frac{\mu(d+\sigma) (d+r_s)N}
{(d+r_a)(d+r_s)+\sigma(d+\mu r_s)+\sigma r_a(1-\mu)} (1-\frac{1}{\mathcal{R}_0}),
I_s^* =\frac{(1-\mu)(d+r_a)}{\mu(d+r_s)}I_a^*.$
\item[(iii)] If $\mathcal{R}_0>1$ and $\mu=0$, then system (\ref{SIR-dim3}) has two equilibria: the disease-free equilibrium $E_0(N,0,0)$  and the asymptomatic-free equilibrium $E_2 (S_2^*,0,I_{s2}^*)$, where
$S_2^*=  \frac{N}{\mathcal{R}_0},
I_{s2}^*= \frac{d+\sigma}{d+\sigma+r_s}N(1-\frac{1}{\mathcal{R}_0}).$
\item[(iv)] If $\mathcal{R}_0>1$ and $\mu=1$, the system (\ref{SIR-dim3}) has two equilibria: the disease-free equilibrium $E_0(N,0,0)$   and the symptomatic-free equilibrium $E_3 (S_3^*,I_{a3}^*,0)$, where
$S_3^*=  \frac{N}{\mathcal{R}_0},
I_{a3}^*= \frac{d+\sigma}{d+\sigma+r_a}N(1-\frac{1}{\mathcal{R}_0}).$
\end{itemize}
\end{lemma}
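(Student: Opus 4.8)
The plan is to find all non-negative solutions of the algebraic system obtained by setting the right-hand sides of \eqref{SIR-dim3} equal to zero, namely
\begin{align*}
(d+\sigma)(N-S)&=\beta S(I_a+\alpha I_s)+\sigma(I_a+I_s),\\
\mu\beta S(I_a+\alpha I_s)&=(d+r_a)I_a,\\
(1-\mu)\beta S(I_a+\alpha I_s)&=(d+r_s)I_s.
\end{align*}
First I would note that $(S,I_a,I_s)=(N,0,0)$ solves this system for every admissible choice of parameters, which yields the disease-free equilibrium $E_0$ claimed in all four items; it then remains to locate the equilibria with $(I_a,I_s)\neq(0,0)$ and to decide for which parameter values they are non-negative.

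I would then split according to the value of $\mu$. When $0<\mu<1$, the second and third equations (together with $d+r_a,d+r_s>0$) force $I_a$ and $I_s$ to vanish simultaneously, so any non-disease-free equilibrium has $I_a,I_s>0$; dividing the second equation by the third yields $I_s=\frac{(1-\mu)(d+r_a)}{\mu(d+r_s)}\,I_a$, and feeding this back into the second equation the common factor $I_a$ cancels, leaving a linear relation that gives $S^{*}=\frac{(d+r_a)(d+r_s)}{\beta\,[\mu(d+r_s)+\alpha(1-\mu)(d+r_a)]}=N/\mathcal{R}_0$ after using the expression \eqref{cons-R0} for $\mathcal{R}_0$. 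Substituting $\beta S^{*}(I_a+\alpha I_s)=(d+r_a)I_a/\mu$ into the first equation then makes it linear in $I_a$ and produces $I_a^{*}$ in the stated form, with $N-S^{*}=N(1-1/\mathcal{R}_0)$, and hence $I_s^{*}$. When $\mu=0$ (respectively $\mu=1$) the second (respectively third) equation already forces $I_a=0$ (respectively $I_s=0$), and solving the two remaining scalar equations in the same way yields $E_2$ (respectively $E_3$); here $\mathcal{R}_0>0$ guarantees $\alpha\beta>0$, so no division by zero occurs.

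Finally I would extract the threshold: in every non-disease-free case the infective coordinates carry the factor $1-1/\mathcal{R}_0$, so they are positive when $\mathcal{R}_0>1$ and non-positive (vanishing only at $E_0$) when $\mathcal{R}_0\le 1$, which settles item (i) together with the ``two equilibria'' counts in (ii)--(iv). For the endemic equilibrium $E_1$ I would in addition check that it lies in the interior of $\mathcal{D}_0$: one has $S^{*}=N/\mathcal{R}_0\in(0,N)$ and $I_a^{*},I_s^{*}>0$, while $N-S^{*}-I_a^{*}-I_s^{*}=(r_aI_a^{*}+r_sI_s^{*})/(d+\sigma)>0$ by the recovered-compartment equation of \eqref{model}, so $S^{*}+I_a^{*}+I_s^{*}<N$. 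The only step requiring genuine care is the case bookkeeping --- in particular excluding spurious ``mixed'' boundary equilibria in which exactly one of $I_a$ and $I_s$ vanishes when $0<\mu<1$, and keeping track of which of $\mu$, $\alpha$, $\beta$ may be zero; everything else is elementary algebra.
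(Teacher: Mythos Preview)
Your proposal is correct and is precisely the ``straightforward calculation'' the paper alludes to; the paper itself provides no further proof of this lemma beyond that phrase, so there is nothing to compare against. Your extra checks (ruling out mixed boundary equilibria when $0<\mu<1$, and verifying $S^*+I_a^*+I_s^*<N$ via the $R$-equation) go slightly beyond what the paper states explicitly and are welcome.
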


\medskip

We now discuss the local stability and topological classification of these equilibria in $\mathbb{R}_+^3$, respectively. We first study the disease-free equilibrium $E_0(N, 0, 0)$
and have the following lemma.

\begin{lemma}
\label{localstablity}
The disease-free equilibrium $E_0(N, 0, 0)$ of  system (\ref{SIR-dim3}) in $\mathbb{R}_+^3$  is  asymptotically stable if $\mathcal{R}_0<1$;
$E_0(N, 0, 0)$ is a saddle-node with one dimensional center manifold and two dimensional stable manifold if
$\mathcal{R}_0=1$; and $E_0(N, 0, 0)$ is a saddle with two dimensional stable manifold and  one dimensional unstable manifold if $\mathcal{R}_0>1$.
\end{lemma}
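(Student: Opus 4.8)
The plan is to linearize system \eqref{SIR-dim3} at $E_0=(N,0,0)$ and read off the eigenvalues of the Jacobian, then handle the degenerate case $\mathcal{R}_0=1$ separately by a center manifold reduction. The Jacobian at $E_0$ has block-triangular structure: differentiating the $S$-equation gives a row $(-(d+\sigma),\,-\beta N-\sigma,\,-\alpha\beta N-\sigma)$, while the $(I_a,I_s)$-subsystem decouples to first order and its Jacobian is exactly $F-V$ with $\beta_1=\beta_2=\beta$, i.e.
\[
J_{22}=\begin{pmatrix} \mu\beta N-(d+r_a) & \alpha\mu\beta N \\ (1-\mu)\beta N & \alpha(1-\mu)\beta N-(d+r_s)\end{pmatrix}.
\]
Thus one eigenvalue is $-(d+\sigma)<0$ and the other two are the eigenvalues of $J_{22}$. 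First I would compute $\det J_{22}$ and $\mathrm{tr}\,J_{22}$ explicitly. A direct calculation gives $\det J_{22}=(d+r_a)(d+r_s)\bigl(1-\mathcal{R}_0\bigr)$ using the expression \eqref{cons-R0} for $\mathcal{R}_0$, and $\mathrm{tr}\,J_{22}=\mu\beta N+\alpha(1-\mu)\beta N-(d+r_a)-(d+r_s)$. When $\mathcal{R}_0<1$ one checks $\det J_{22}>0$ and $\mathrm{tr}\,J_{22}<0$ (the latter because $\mathcal{R}_0<1$ forces $\beta N\mu<d+r_a$ and $\beta N\alpha(1-\mu)<d+r_s$), so both eigenvalues have negative real part and $E_0$ is asymptotically stable. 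When $\mathcal{R}_0>1$, $\det J_{22}<0$, so $J_{22}$ has one positive and one negative eigenvalue; combined with the $-(d+\sigma)$ direction this gives a saddle with two-dimensional stable and one-dimensional unstable manifold.

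The interesting case is $\mathcal{R}_0=1$, where $\det J_{22}=0$ and $\mathrm{tr}\,J_{22}<0$ (same sign argument as above, now with equalities relaxed to one strict inequality preserved), so $J_{22}$ has a simple zero eigenvalue and a negative one; the full Jacobian has eigenvalues $0,\,-(d+\sigma),\,\mathrm{tr}\,J_{22}<0$. Hence $E_0$ has a one-dimensional center manifold and a two-dimensional stable manifold. To determine the flow on the center manifold I would change coordinates to put the linear part in normal form: translate $E_0$ to the origin, diagonalize, and let $w$ be the coordinate along the kernel of $J_{22}$. The reduced equation on the one-dimensional center manifold takes the form $\dot w=a\,w^2+O(w^3)$, and the claim is $a\neq 0$, which makes $E_0$ a saddle-node. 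Computing $a$ amounts to substituting the quadratic approximation of the center manifold into the nonlinear terms $-\beta S(I_a+\alpha I_s)$, $\mu\beta S(I_a+\alpha I_s)$, $(1-\mu)\beta S(I_a+\alpha I_s)$; since these are genuinely quadratic and the center eigenvector has nonzero components in the $I_a,I_s$ directions (as $0<\mu<1$, or in the appropriate degenerate direction otherwise), the coefficient $a$ is a nonzero multiple of $\beta$ times a positive combination of parameters, so $a<0$.

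The main obstacle is the bookkeeping in the center manifold computation: one must carefully identify the left and right null eigenvectors of $J_{22}$, project the quadratic nonlinearity onto the center direction, and verify the resulting coefficient does not vanish identically. I expect this to be a short but error-prone calculation; the sign of $a$ should follow cleanly once the null eigenvectors are written down, because all the quadratic terms carry the common factor $\beta S^*(\cdots)$ evaluated at $S^*=N$ and the null eigenvector of $J_{22}$ can be taken with positive entries (the off-diagonal entries $\alpha\mu\beta N$ and $(1-\mu)\beta N$ of $J_{22}$ are nonnegative, so $J_{22}$ is, up to the diagonal, a nonnegative matrix and its kernel is spanned by a nonnegative vector). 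An alternative to the center manifold computation, which I would mention as a cross-check, is to invoke the equilibrium count from Lemma \ref{existen}: at $\mathcal{R}_0=1$ the endemic branch $E_1$ collides with $E_0$ (note $S^*=N/\mathcal{R}_0\to N$ and $I_a^*,I_s^*\to 0$), and a transversality argument in $(\, \mathcal{R}_0,\text{state}\,)$-space identifies this as a transcritical-type collision whose restriction to fixed $\mathcal{R}_0=1$ is a saddle-node, consistent with the direct computation.
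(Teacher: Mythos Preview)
Your proposal is correct and follows essentially the same route as the paper. Both compute the Jacobian at $E_0$, exploit its block-triangular structure to split off the eigenvalue $-(d+\sigma)$, and analyze the remaining $2\times 2$ block; your trace/determinant discussion is just a repackaging of the paper's quadratic factor $\lambda^2-a_1\lambda+a_0$ with $a_0=(d+r_a)(d+r_s)(1-\mathcal{R}_0)$ and $a_1=\mathrm{tr}\,J_{22}$. For the degenerate case $\mathcal{R}_0=1$ the paper likewise identifies the eigenvalues $0,\,a_1<0,\,-(d+\sigma)$, splits into the subcases $0<\mu<1$, $\mu=0$, $\mu=1$ to locate the center eigenvector, and then simply asserts the saddle-node conclusion ``by tedious calculations of normal form''; your heuristic that the quadratic coefficient $a$ is nonzero because the null eigenvector of $J_{22}$ can be taken nonnegative and the nonlinearity carries the factor $\beta S(I_a+\alpha I_s)$ is a reasonable sketch of that omitted computation, and your bifurcation cross-check via the collision of $E_1$ with $E_0$ is exactly the observation the paper records immediately after the lemma.
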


\begin{proof}

A routine computation shows that the characteristic
polynomial of  system  (\ref{SIR-dim3}) at $E_0$ is
\begin{eqnarray}\label{ChEq}
f_1(\lambda) = (\lambda+d+\sigma)(\lambda^{2}-a_1\lambda+a_0),
\end{eqnarray}
where $a_0=(d+r_{a})(d+r_{s})(1-\mathcal{R}_0),$
$$
a_1=(d+r_{a})(\beta N\frac{\mu}{d+r_{a}}-1)+(d+r_{s})(\alpha\beta N\frac{1-\mu}{d+r_{s}}-1) .
$$
It is clear that
$-(d+\sigma)<0$ is always one root of \eqref{ChEq}.
We divide three cases: $\mathcal{R}_0<1$, $\mathcal{R}_0=1$ and $\mathcal{R}_0>1$ to discuss the other roots of \eqref{ChEq}.

If $\mathcal{R}_0<1$, then $a_1<0$ and $a_0>0$ by  $\beta N\frac{\mu}{d+r_{a}}<\mathcal{R}_0$ and $\beta N\frac{\alpha(1-\mu)}{d+r_{s}}<\mathcal{R}_0$.
Thus,  three roots of \eqref{ChEq} have negative real parts, which leads to the local asymptotically stable of the disease-free equilibrium $E_0$.


If $\mathcal{R}_0=1$, then $a_0=0$ and $a_1<0$. Hence, the characteristic equation $f_1(\lambda) =0$ has three roots: $\lambda_1=-(d+\sigma)<0$, $\lambda_2=a_1<0$ and $\lambda_3=0$.
For calculating the associated eigenvectors $v_i$ of $\lambda_i$, $i=1,2,3$,  we consider $J(E_0)$ with respect to $\mu$ in three cases: (i) $0<\mu<1$, (ii) $\mu=0$ and (iii) $\mu=1$, and we can obtain that $E_0$ is a saddle-node with one dimensional center manifold and two dimensional stable manifold by tedious calculations of normal form.

Summarized the above analysis, we complete proof of this lemma.
\end{proof}


From lemma \ref{existen} and lemma \ref{localstablity}, we can see that system \eqref{SIR-dim3} undergoes  saddle-node bifurcation in a small neighborhood of $E_0(N,0,0)$
as $\mathcal{R}_0$ increases passing through $\mathcal{R}_0=1.$





About the endemic equilibria, we have the following local stability.

\begin{lemma}
\label{L-E1}
The  endemic equilibrium $E_1(S^*, I_a^*, I_s^*)$ of  system (\ref{SIR-dim3})  is   asymptotically stable  if $\mathcal{R}_0>1$ and $0<\mu<1$; the  asymptomatic-free equilibrium $E_2 (S_2^*,0,I_{s2}^*)$  of  system (\ref{SIR-dim3})  is   asymptotically stable  if $\mathcal{R}_0>1$ and $\mu=0$; and the  symptomatic-free equilibrium $E_3 (S_3^*,I_{a3}^*,0)$  of  system (\ref{SIR-dim3})  is  asymptotically stable  if $\mathcal{R}_0>1$ and $\mu=1$.
\end{lemma}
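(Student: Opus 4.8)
The plan is to linearize system~\eqref{SIR-dim3} at each of $E_1$, $E_2$, $E_3$, to simplify the Jacobian using the equilibrium relations, and then to read off local asymptotic stability from the Routh--Hurwitz criterion. The two boundary cases $\mu=0$ and $\mu=1$ collapse to a planar problem and should be dispatched first; the interior case $0<\mu<1$ (where $E_1$ exists, by Lemma~\ref{L-equils}(ii)) is the substantive one.

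\emph{Cases $\mu=0$ and $\mu=1$.} When $\mu=0$ the $I_a$-equation reads $\dot I_a=-(d+r_a)I_a$, so $J(E_2)$ has $-(d+r_a)<0$ as one eigenvalue, while the other two are the eigenvalues of the Jacobian at $(S_2^*,I_{s2}^*)$ of the planar reduction $\dot S=(d+\sigma)(N-S)-\alpha\beta SI_s-\sigma I_s$, $\dot I_s=\alpha\beta SI_s-(d+r_s)I_s$ of \eqref{SIR-dim3} to the invariant plane $\{I_a=0\}$ (here $\mathcal{R}_0>1$ forces $\alpha\beta>0$, so $I_{s2}^*>0$). Using $\alpha\beta S_2^*=d+r_s$, this planar Jacobian has trace $-(d+\sigma)-\alpha\beta I_{s2}^*<0$ and determinant $\alpha\beta I_{s2}^*(\alpha\beta S_2^*+\sigma)=\alpha\beta I_{s2}^*(d+r_s+\sigma)>0$, so $E_2$ is asymptotically stable. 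The case $\mu=1$ is the mirror image: $-(d+r_s)<0$ is one eigenvalue of $J(E_3)$, and restricting \eqref{SIR-dim3} to $\{I_s=0\}$ gives $\dot S=(d+\sigma)(N-S)-\beta SI_a-\sigma I_a$, $\dot I_a=\beta SI_a-(d+r_a)I_a$, whose equilibrium $(S_3^*,I_{a3}^*)$ has the analogous negative trace and positive determinant through $\beta S_3^*=d+r_a$.

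\emph{Case $0<\mu<1$.} Put $g^*=I_a^*+\alpha I_s^*>0$. The relations $\dot I_a=\dot I_s=0$ at $E_1$, i.e.\ $\mu\beta S^*g^*=(d+r_a)I_a^*$ and $(1-\mu)\beta S^*g^*=(d+r_s)I_s^*$, turn the two diagonal entries of the infectious block of $J(E_1)$ into $-e$ and $-\ell$, where $e:=(d+r_a)\alpha I_s^*/g^*>0$ and $\ell:=(d+r_s)I_a^*/g^*>0$; setting in addition $A=(d+\sigma)+\beta g^*$, $B=\beta S^*+\sigma$, $C=\alpha\beta S^*+\sigma$, $D=\mu\beta g^*$, $G=\alpha\mu\beta S^*$, $H=(1-\mu)\beta g^*$, $K=(1-\mu)\beta S^*$, one has
\[
J(E_1)=\begin{pmatrix}-A & -B & -C\\ D & -e & G\\ H & K & -\ell\end{pmatrix}.
\]
The crucial point is that the same relations force $GK=e\ell$: the $\{2,3\}$ principal minor of $J(E_1)$ then vanishes, the term $A(e\ell-GK)$ in the expansion of $-\det J(E_1)$ drops out, and the characteristic polynomial $\lambda^3+c_2\lambda^2+c_1\lambda+c_0$ has $c_2=A+e+\ell>0$, $c_1=A(e+\ell)+BD+CH>0$, and $c_0=BD\ell+BGH+CDK+CeH>0$ (a sum of nonnegative terms, with $BD\ell>0$). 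By Routh--Hurwitz for a monic cubic, $E_1$ is asymptotically stable as soon as $c_1c_2-c_0>0$, all eigenvalues then being hyperbolic.

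I expect $c_1c_2-c_0>0$ to be the main obstacle. The equilibrium relations yield the exact identities $BDe=\tfrac{d+r_a}{d+r_s}BGH$ and $CH\ell=\tfrac{d+r_s}{d+r_a}CDK$, and a short expansion reduces the required inequality to
\[
A^2(e+\ell)+A(e+\ell)^2+ABD+ACH+\left(\tfrac{d+r_a}{d+r_s}-1\right)BGH+\left(\tfrac{d+r_s}{d+r_a}-1\right)CDK>0.
\]
When $r_a=r_s$ the last two terms cancel and positivity is immediate---this is exactly the regime in which the paper later establishes global stability. When $r_a\neq r_s$, say $r_a>r_s$, the only negative contribution is $-\tfrac{r_a-r_s}{d+r_a}CDK$, which must be absorbed by $ACH$ together with $A^2(e+\ell)$ and $ABD$; the bounds available for this are $A\ge d+\sigma$, $e<d+r_a$, $\ell<d+r_s$, the identities $e=\tfrac{d+r_a}{d+r_s}\alpha(1-\mu)\beta S^*$ and $\ell=\tfrac{d+r_s}{d+r_a}\mu\beta S^*$, and $\mu\beta S^*<d+r_a$, $\alpha\beta S^*<d+r_s$ (consequences of $e,\ell>0$). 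Carrying out this estimate, and the symmetric one for $r_s>r_a$, completes the proof of Lemma~\ref{L-E1}.
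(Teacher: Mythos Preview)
Your treatment of the boundary cases $\mu=0$ and $\mu=1$ is correct and in fact more explicit than the paper's (which merely asserts that the eigenvalues are easily seen to have negative real parts). For $0<\mu<1$ your strategy---linearize, exploit the equilibrium relations to simplify the Jacobian, then apply Routh--Hurwitz---is the same as the paper's, and your derivation of $c_2,c_1,c_0$ via the identity $GK=e\ell$ is correct and elegant. The reduced expression
\[
c_1c_2-c_0=A^2(e+\ell)+A(e+\ell)^2+ABD+ACH+\Bigl(\tfrac{d+r_a}{d+r_s}-1\Bigr)BGH+\Bigl(\tfrac{d+r_s}{d+r_a}-1\Bigr)CDK
\]
is also right.

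The gap is the last sentence. You assert that when $r_a>r_s$ the single negative term $-\tfrac{r_a-r_s}{d+r_a}CDK$ can be absorbed by $ACH$, $A^2(e+\ell)$ and $ABD$ using the bounds you list, but you do not carry this out, and with those bounds alone it is not clear that it works: from $\mu\beta S^*<d+r_a$ one gets $\tfrac{r_a-r_s}{d+r_a}CDK<(r_a-r_s)C(1-\mu)\beta g^*$, while $ACH=A\,C(1-\mu)\beta g^*$ with $A=(d+\sigma)+\beta g^*$, and there is no a priori reason why $A$ should dominate $r_a-r_s$. (There is also a small slip: the equilibrium relation gives $\alpha(1-\mu)\beta S^*<d+r_s$, not $\alpha\beta S^*<d+r_s$.) So as written, the proof is incomplete precisely in the regime $r_a\neq r_s$.

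The paper closes this gap differently: it first rescales variables so that the system has only five effective parameters $N_1,d_1,\sigma_1,r,\mu_1$, and then organizes the Routh--Hurwitz quantity $\xi_2\xi_1-\xi_0$ as a \emph{quadratic polynomial in} $\hat I_a^*$, namely $c_0+c_1\hat I_a^*+c_2(\hat I_a^*)^2$, showing each coefficient is positive. The only non-obvious signs occur in $c_1$, in the terms $r(d_1-\sigma_1\mu_1)$ and $\mu_1 r\alpha(d_1-\sigma_1)$; these are positive because $d_1-\sigma_1\mu_1=\tfrac{d+\sigma\mu}{d+r_s}>0$ and $d_1-\sigma_1=\tfrac{d+\sigma(1-\mu)}{d+r_s}>0$, identities that use $0<\mu<1$ in an essential way. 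This organization is what makes the sign analysis go through for arbitrary $r_a\neq r_s$; you should either adopt it or supply a complete estimate in your coordinates.
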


\begin{proof}
Either $\mu=0$ or $\mu=1$, it is easy to compute the eigenvalues of the Jacobian matrix of  system  (\ref{SIR-dim3}) at $E_2$ or $E_3$, respectively, and
find that all eigenvalues have the negative real parts. Hence, $E_2 (S_2^*,0,I_{s2}^*)$ or $E_3 (S_3^*,I_{a3}^*,0)$ is  asymptotically stable if $\mathcal{R}_0>1$, respectively.

After here we only
prove that $E_1(S^*, I_a^*, I_s^*)$  is   asymptotically stable  if $\mathcal{R}_0>1$ and $0<\mu<1$.
To make the calculation easier, we use the variables change
\begin{equation*}\label{change1}
S=  \frac{ (d+r_s)}{\mu\beta}  \hat{S}, ~I_a= \frac{ (d+r_s)}{\beta} \hat{I}_a, ~~I_s= \frac{ (d+r_s)}{\beta}\hat{I}_s, ~dt= \frac{d\tau}{(d+r_s)},
\end{equation*}
which reduces system (\ref{SIR-dim3}) into the following system,
\begin{equation}\label{SIR-1}
\begin{cases}
\frac{dS}{d\tau}=N_1-d_1S-\sigma_1 I_a-\sigma_1I_s - S(I_a+\alpha I_s),\\
\frac{dI_a}{d\tau}=-r I_a + S(I_a+\alpha I_s),\\
\frac{dI_s}{d\tau}=-I_s +\mu_1 S(I_a+\alpha I_s),
\end{cases}
\end{equation}
where
\begin{eqnarray*}\label{Pchange1}
\begin{split}
N_1 &=N (d+\sigma) \mu \beta/(d+r_s)^2 , ~d_1=(d+\sigma)/(d+r_s),
\\
\sigma_1 &=\sigma\mu/(d+r_s), ~r=(d+r_a)/(d+r_s), ~\mu_1=(1-\mu)/\mu
\end{split}
\end{eqnarray*}
and for simplicity we denote $\hat{S}, \hat{I}_a, \hat{I}_s$ by $S, I_a, I_s$ respectively.

  When $\mathcal{R}_0>1$, the disease-free equilibrium $E_0(N, 0, 0)$ and endemic equilibrium $E_1(S^*, I_a^*, I_s^*)$ of  system (\ref{SIR-dim3}) are transformed into the disease-free equilibrium $\hat{E}_0(N_1/d_1, 0, 0)$ and endemic equilibrium $\hat{E}_1(\hat{S}^*, \hat{I}_a^*, \hat{I}_s^*)$ of  system (\ref{SIR-1}) respectively, where
\begin{eqnarray*}
\begin{split}
\hat{S}^* =\frac{N_1/d_1}{\hat{R}_0}, ~\hat{I}_a^* =  \frac{N_1}{\sigma_1+r \sigma_1 \mu_1+r}(1-\frac{1}{\hat{R}_0}),
~\hat{I}_s^*  = \mu_1 r I_a^*.
\end{split}
\end{eqnarray*}
Notice that  $\hat{R}_0:=\frac{N_1}{d_1}(\frac{1}{r}+\alpha\mu_1)>1$  if and only if $\mathcal{R}_0>1$.

The characteristic
equation of system (\ref{SIR-1}) at $\hat{E}_1$ is
\begin{eqnarray*}\label{Ch-E1}
f_2(\lambda) ={\rm det} (\lambda I-J(\hat{E}_1) ) = \lambda^3+ \xi_2 \lambda^2 + \xi_1 \lambda +\xi_0,
\end{eqnarray*}
where
\begin{eqnarray*}
\begin{split}
\xi_2 &=\{\sigma_1+r \sigma_1 \mu_1+r+r^2 \mu_1 \alpha \sigma_1+r^3 \mu_1^2 \alpha \sigma_1+r^3 \mu_1 \alpha+d_1 \sigma_1 \mu_1 r \alpha+d_1 \sigma_1 \mu_1^2 r^2 \alpha+N_1
\\
~~ & +d_1 \sigma_1+d_1 r \sigma_1 \mu_1+2 N_1 \mu_1 r \alpha+r^2 \mu_1^2 \alpha^2 N_1\}/\{(\sigma_1+r \sigma_1 \mu_1+r)(r \mu_1 \alpha+1)\},
\\
\xi_1 &= d_1 (1+r^2 \mu_1 \alpha)/(r \mu_1 \alpha+1) +(\sigma_1 \mu_1+1+r+\sigma_1) (r \mu_1 \alpha+1) \hat{I}_a^*,
\\
\xi_0 &=N_1\mu_1 r\alpha+N_1-rd_1=rd_1(\hat{R}_0-1).
\end{split}
\end{eqnarray*}
It can be seen that  all coefficients $\xi_j$ of polynomial $f_2(\lambda)$ are positive if $\hat{R}_0>1$, where $j=0, 1, 2$.
Moreover,  we claim that $\xi_2\xi_1-\xi_0>0$. In fact,
\begin{eqnarray*}
\begin{split}
\xi_2\xi_1-\xi_0 =c_0+c_1 \hat{I}_a^* +c_2 (\hat{I}_a^*)^2,
\end{split}
\end{eqnarray*}
where
\begin{eqnarray*}
\begin{split}
c_0= & ~ \frac{d_1(1+r^2\mu_1 \alpha) (r^2 \mu_1 \alpha+ d_1\mu_1 r\alpha+1+d_1)}{(r \mu_1 \alpha+1)^2},
\\
c_1= & ~ d_1 \mu_1^2 r \alpha \sigma_1+r^3 \mu_1 \alpha+r^2 \mu_1 \alpha \sigma_1+2 d_1 r^2 \mu_1 \alpha+d_1 \sigma_1 \mu_1 r \alpha+\sigma_1 \mu_1
\\
&~+d_1 \sigma_1 \mu_1+1+2 d_1+d_1 \sigma_1    +r(d_1-\sigma_1 \mu_1) + \mu_1 r \alpha(d_1-\sigma_1),
\\
c_2= & ~ (r \mu_1 \alpha+1)^2 (\sigma_1\mu_1+1+r+\sigma_1).
\end{split}
\end{eqnarray*}
It is easy to see that $c_0>0$ and $c_2>0$. Note that $ d_1-\sigma_1 \mu_1=\frac{d+\sigma \mu}{d+r_s}>0$ and $d_1-\sigma_1=\frac{d+\sigma (1- \mu)}{d+r_s}>0$ since $0<\mu<1$.  This implies that $c_1>0$. Moreover, $\hat{I}_a^*>0$ yields that $\xi_2\xi_1-\xi_0>0$ and what we claimed is proved.

By the Routh-Hurwitz Criterion, we know that all eigenvalues of the characteristic
polynomial $f_2(\lambda)$ have negative real parts. Thus, endemic equilibrium $\hat{E}_1$ of  system (\ref{SIR-1})
is asymptotically stable. This leads that endemic equilibrium $E_1(S^*, I_a^*, I_s^*)$ of  system (\ref{SIR-dim3}) is also asymptotically stable.
\end{proof}

From lemma \ref{localstablity} and lemma \ref{L-E1}, we can see that $\mathcal{R}_0$ is the threshold quantity
of local dynamics of system \eqref{SIR-dim3} in $\mathbb{R}^3_+$.
 By Theorem \ref{existenceUni}, we only need to consider  system \eqref{SIR-dim3}
 for its global dynamics in $\mathcal{D}_0$.
The following theorems will show that $\mathcal{R}_0$ is also the threshold quantity of global dynamics of system \eqref{SIR-dim3} in $\mathcal{D}_0$.

\begin{theorem}
\label{globalE0}
If $\mathcal{R}_0\le 1$, then the disease-free equilibrium $E_0(N, 0, 0)$ of system (\ref{SIR-dim3}) is global asymptotically stable in $\mathcal{D}_0$.
\end{theorem}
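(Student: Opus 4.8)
The plan is to treat the whole range $\mathcal{R}_0\le 1$ at once with a single Lyapunov function together with LaSalle's invariance principle; the only genuinely new case is $\mathcal{R}_0=1$, since for $\mathcal{R}_0<1$ the assertion is exactly Theorem \ref{th-E02} specialized to $\beta_1=\beta_2=\beta$. Because $\mathcal{R}_0=0$ forces $\beta N=0$, and then $I_a,I_s$ decay exponentially and $S\to N$, I may assume $\beta N>0$, i.e. $\mathcal{R}_0\in(0,1]$. On $\mathcal{D}_0$ I would use
\[
L(S,I_a,I_s)=\frac{I_a}{d+r_a}+\frac{\alpha\,I_s}{d+r_s}\ \ge\ 0 .
\]
Differentiating $L$ along solutions of \eqref{SIR-dim3} and using the identity $\frac{\mu}{d+r_a}+\frac{\alpha(1-\mu)}{d+r_s}=\mathcal{R}_0/(\beta N)$ from \eqref{cons-R0}, one gets
\[
\dot L=\beta S(I_a+\alpha I_s)\left(\frac{\mu}{d+r_a}+\frac{\alpha(1-\mu)}{d+r_s}\right)-(I_a+\alpha I_s)=(I_a+\alpha I_s)\left(\frac{\mathcal{R}_0 S}{N}-1\right).
\]
Since $0\le S\le N$ on $\mathcal{D}_0$ and $\mathcal{R}_0\le 1$, the bracket is $\le\mathcal{R}_0-1\le 0$, so $\dot L\le 0$ on $\mathcal{D}_0$.

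Next I would invoke LaSalle's invariance principle on the compact positively invariant set $\mathcal{D}_0$ (positive invariance of $\mathcal{D}_0$ being established in the proof of Theorem \ref{existenceUni}): every solution with initial value in $\mathcal{D}_0$ approaches the largest invariant subset $M$ of $E:=\{(S,I_a,I_s)\in\mathcal{D}_0:\dot L=0\}$, and the task is to show $M=\{E_0\}$. Let $(S(t),I_a(t),I_s(t))$, $t\in\mathbb{R}$, be a complete orbit contained in $M$. If $I_a(t_0)+\alpha I_s(t_0)>0$ for some $t_0$, then $\dot L=0$ forces $\mathcal{R}_0 S(t_0)=N$, which together with $S(t_0)\le N$ and $\mathcal{R}_0\le 1$ gives $S(t_0)=N$ and $\mathcal{R}_0=1$. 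But then $S$ attains its global maximum $N$ over $\mathcal{D}_0$ at $t_0$, so $\dot S(t_0)=0$, whereas the first equation of \eqref{SIR-dim3} yields
\[
\dot S(t_0)=-\beta N\bigl(I_a(t_0)+\alpha I_s(t_0)\bigr)-\sigma\bigl(I_a(t_0)+I_s(t_0)\bigr)<0
\]
(here $\beta N>0$ since $\mathcal{R}_0=1$), a contradiction. Hence $I_a+\alpha I_s\equiv 0$ along the orbit. When $\alpha>0$ this gives $I_a\equiv I_s\equiv 0$; when $\alpha=0$ it gives $I_a\equiv 0$, and then $\dot I_s=-(d+r_s)I_s$, so boundedness of the complete orbit forces $I_s\equiv 0$ as well. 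In either case $\dot S=(d+\sigma)(N-S)$ along the orbit, and boundedness again forces $S\equiv N$. Thus $M=\{E_0\}$, so every solution in $\mathcal{D}_0$ converges to $E_0$.

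Finally, to upgrade convergence to global asymptotic stability I would record that $E_0$ is Lyapunov stable: for $\mathcal{R}_0<1$ this is Lemma \ref{localstablity}; for $\mathcal{R}_0=1$ it follows from the monotonicity of $L$, which keeps $I_a$ (and $I_s$ when $\alpha>0$) small, after which $\frac{d}{dt}(N-S)\le-(d+\sigma)(N-S)+(\beta N+\sigma)(I_a+I_s)$ and a Gronwall estimate keep $N-S$ small, while for $\alpha=0$ the exponentially stable scalar equation for $I_s$ driven by the small term $I_a$ keeps $I_s$ small too. I expect the main obstacle to be the identification $M=\{E_0\}$ in the critical case $\mathcal{R}_0=1$: there $E$ is strictly larger than $\{E_0\}$, containing the whole face $\{S=N\}$ of $\mathcal{D}_0$ together with the edge $\{I_a+\alpha I_s=0\}$, so one must genuinely use the invariance of $M$ — through the sign of $\dot S$ at a boundary maximum of $S$ — to collapse it to the single equilibrium; the degenerate value $\alpha=0$ then requires only the small extra step indicated above.
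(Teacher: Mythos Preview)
Your proof is correct and follows essentially the same approach as the paper: the paper's Lyapunov function $L=I_a+\frac{d+r_a}{d+r_s}\alpha I_s$ is a positive scalar multiple of yours, and the intended argument is the same LaSalle computation you carry out. You supply considerably more detail than the paper does---in particular the identification of the maximal invariant set when $\mathcal{R}_0=1$ via the sign of $\dot S$ at $S=N$, the separate treatment of $\alpha=0$, and the Lyapunov-stability argument---all of which the paper suppresses.
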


The proof of this theorem can be finished by
constructing a Liapunov function
 \begin{equation*}
 \label{Lia1}
L(S, I_a,I_s)  =I_a(t) +\frac{d+r_a}{d+r_s}\alpha I_s(t)
\end{equation*}
in $\mathcal{D}_0$. For saving space, we bypass it.


\begin{theorem}
\label{globalE1}
If $\mathcal{R}_0> 1$ and $\mu=0$ (resp. $\mu=1$), then    $E_2(S_2^*,0,I_{s2}^*)$
(resp. $E_3(S_3^*,I_{a3}^*,0)$) attracts all orbits of system (\ref{SIR-dim3}) in $\mathcal{D}_0$ except both $E_0(N, 0, 0)$
and a positive orbit $\gamma$ in its two dimensional stable manifold,
where
$$
\gamma=\{(S,I_a,I_s)\in \mathcal{D}_0:\ I_a=0,\ I_s=0, \ 0<S<N \} .
$$
\end{theorem}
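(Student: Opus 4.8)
The plan is to exploit that, when $\mu=0$, the $I_a$-equation of \eqref{SIR-dim3} decouples completely: $\dot I_a=-(d+r_a)I_a$, so $I_a(t)=I_{a0}e^{-(d+r_a)t}\to 0$. Consequently the $(S,I_s)$-part of \eqref{SIR-dim3} is an asymptotically autonomous planar system whose limiting system is
\begin{equation}\label{plane-mu0}
\dot S=(d+\sigma)(N-S)-\alpha\beta SI_s-\sigma I_s,\qquad \dot I_s=I_s\big(\alpha\beta S-(d+r_s)\big).
\end{equation}
The case $\mu=1$ is entirely symmetric: now $I_s(t)=I_{s0}e^{-(d+r_s)t}\to0$, and the limiting planar system in $(S,I_a)$ is \eqref{plane-mu0} with $I_s,\alpha,r_s$ replaced by $I_a,1,r_a$ and $E_2$ replaced by $E_3$. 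So I only describe $\mu=0$.

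First I would analyse \eqref{plane-mu0} on the positively invariant triangle $\Delta=\{(S,I_s):S\ge0,\ I_s\ge0,\ S+I_s\le N\}$, the image of $\mathcal D_0\cap\{I_a=0\}$. By Lemma \ref{existen} it has exactly two equilibria there, $E_0=(N,0)$ and $E_2=(S_2^*,I_{s2}^*)$ in the interior of $\Delta$. A direct computation (the planar specialization of Lemma \ref{localstablity}, using $\mathcal R_0=\alpha\beta N/(d+r_s)>1$) shows that $E_0$ is a saddle whose stable set is precisely the invariant line $\{I_s=0\}$, its transverse eigenvalue being $\alpha\beta N-(d+r_s)=(d+r_s)(\mathcal R_0-1)>0$; and $E_2$ is locally asymptotically stable by Lemma \ref{L-E1}. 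To rule out periodic orbits I would apply Dulac's criterion on $\{I_s>0\}$ with multiplier $B=1/I_s$:
\[
\frac{\partial}{\partial S}\!\left(\frac{\dot S}{I_s}\right)+\frac{\partial}{\partial I_s}\!\left(\frac{\dot I_s}{I_s}\right)=-\frac{d+\sigma}{I_s}-\alpha\beta<0 .
\]
Poincar\'e--Bendixson then forces every orbit of \eqref{plane-mu0} in $\Delta\cap\{I_s>0\}$ to converge to an equilibrium, and since $W^s(E_0)=\{I_s=0\}$ this limit can only be $E_2$. Hence $E_2$ attracts all of $\Delta\cap\{I_s>0\}$, while orbits in $\{I_s=0\}$ tend to $E_0$.

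Next I would lift this to \eqref{SIR-dim3}. If $I_{a0}=0$ the orbit stays in the invariant face $\{I_a=0\}$, where the flow is exactly \eqref{plane-mu0}; it therefore tends to $E_2$ when $I_{s0}>0$, and to $E_0$ when $I_{s0}=I_{a0}=0$, the latter orbits forming precisely $\gamma$ together with $E_0$ (the corner $(0,0,0)$ enters $\gamma$ instantly and also tends to $E_0$). If $I_{a0}>0$, then $\dot I_s=\beta SI_a>0$ whenever $I_s=0$ and $S>0$, while $\dot S\ge dN>0$ at $S=0$ since $d>0$; hence $I_s(t)>0$ for all $t>0$. By the theory of asymptotically autonomous semiflows (see \cite{Zhao2003}) the $\omega$-limit set of such an orbit is a nonempty, internally chain transitive set of \eqref{plane-mu0}; since \eqref{plane-mu0} has no cycles and its only equilibria are $E_0$ and $E_2$, joined by the single heteroclinic $W^u(E_0)\to E_2$, the only such sets are $\{E_0\}$ and $\{E_2\}$. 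The option $\{E_0\}$ is excluded by an instability estimate: if $(S,I_a,I_s)(t)\to(N,0,0)$ then for all large $t$
\[
\dot I_s=I_s\big(\alpha\beta S-(d+r_s)\big)+\beta SI_a\ \ge\ \tfrac12(d+r_s)(\mathcal R_0-1)\,I_s>0,
\]
so $I_s$ is eventually increasing, contradicting $I_s\to0$. Hence the $\omega$-limit set is $\{E_2\}$, and $E_2$ attracts every orbit of \eqref{SIR-dim3} in $\mathcal D_0$ other than $E_0$ and $\gamma$.

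The main obstacle is the case $I_{a0}>0$: there the $(S,I_s)$-subsystem is genuinely non-autonomous, so Poincar\'e--Bendixson does not apply directly and one must combine the asymptotically autonomous limit theory with a Butler--McGehee-type exclusion of the saddle $E_0$ from $\omega$-limit sets --- here supplied cheaply by the exponential-growth estimate above, which works because $\mathcal R_0>1$ makes $E_0$ linearly unstable transverse to $\{I_s=0\}$. A secondary, bookkeeping point is to confirm that $\gamma$ (and the corner $(0,0,0)$) together with $E_0$ are exactly the orbits failing to converge to $E_2$, i.e.\ that $W^s(E_0)\cap\mathcal D_0=\overline\gamma\cup\{E_0\}$.
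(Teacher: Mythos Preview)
Your proof is correct but follows a genuinely different route from the paper's. Both arguments begin identically: for $\mu=0$ the equation $\dot I_a=-(d+r_a)I_a$ forces $I_a(t)\to 0$, reducing the problem to the planar limit system \eqref{plane-mu0}. The divergence is in how the planar system is handled. The paper introduces the shift $x=S+\sigma/(\alpha\beta)$, $y=I_s$, and exhibits an explicit Lyapunov function
\[
V(x,y)=\tfrac12(x-x_0)^2+x_0\Bigl(y-y_0-y_0\ln\tfrac{y}{y_0}\Bigr),
\]
whose derivative along orbits is $-(x-x_0)^2(\alpha\beta y+d+\sigma)\le 0$; LaSalle's principle then gives global attraction to $E_2$ in one stroke. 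You instead use Dulac's criterion with multiplier $1/I_s$ to exclude closed orbits and invoke Poincar\'e--Bendixson. Your approach avoids the need to discover the Lyapunov function (and the change of variables that makes it work), at the cost of appealing to more qualitative machinery. Conversely, the paper's Lyapunov function gives a quantitative rate and is self-contained, but its passage from the planar conclusion back to the three-dimensional system is stated rather tersely (``This leads to the conclusion\ldots''), whereas you make that lift explicit via the theory of asymptotically autonomous semiflows and a clean instability estimate ruling $E_0$ out of $\omega$-limit sets. In that respect your writeup is actually more careful than the paper's on the non-autonomous step.
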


\begin{proof}
We first prove the case that $\mathcal{R}_0> 1$ and $\mu=0$.  When $\mu=0$, system (\ref{SIR-dim3}) becomes
\begin{equation}\label{mu0}
\begin{cases}
\dot{S}=(d+\sigma)(N-S)-\beta S(I_a+\alpha I_s)-\sigma (I_a+I_s),\\
\dot{I_a}=-(d+r_a)I_a,\\
\dot{I_s}=\beta S(I_a+\alpha I_s)-(d+r_s)I_s.
\end{cases}
\end{equation}
It is clear that $\lim_{t\to +\infty}I_a(t)=0$. Hence, the limit system of system \eqref{mu0} in $\mathcal{D}_0$  is
\begin{equation}\label{mu0limit}
\begin{cases}
\dot{S}=(d+\sigma)(N-S)-\alpha\beta S I_s-\sigma I_s,\\
\dot{I_s}=\alpha\beta S I_s-(d+r_s)I_s
\end{cases}
\end{equation}
in $\mathcal{D}_{1}=\{(S,I_s):\ 0\le S\le N,\ 0\le I_s\le N\}$, which has two equilibria: $(N,0)$ and $(S_2^*,I_{s2}^*)$.
Equilibrium $(N,0)$ is a saddle and $(S_2^*,I_{s2}^*)$ is locally asymptotically stable if $\mathcal{R}_0>1$.

In the following we  prove that $(S_2^*,I_{s2}^*)$ attracts all orbits of system \eqref{mu0limit} in $\mathcal{D}_{1}$ except both $(N,0)$ and its one dimensional stable manifold.

Let $x=S+\frac{\sigma}{\alpha\beta}$ and $y=I_s$. Then system \eqref{mu0limit} becomes
\begin{equation}\label{mu0xy}
\begin{cases}
\dot{x}=(d+\sigma)(N+\frac{\sigma}{\alpha\beta})-(d+\sigma)x-\alpha\beta xy,\\
\dot{y}=\alpha\beta xy-(d+r_s+\sigma)y.
\end{cases}
\end{equation}
Hence, $(x_0,y_0)=(S_2^*+\frac{\sigma}{\alpha\beta},I_{s2}^*)$ is the unique positive equilibrium of system \eqref{mu0xy} if $\mathcal{R}_0>1$.
Consider the Liapunov function of system \eqref{mu0xy}
$$
V(x,y)=\frac{1}{2}(x-x_0)^2+x_0\left(y-y_0-y_0\ln\frac{y}{y_0}\right)
$$
in $\tilde{\mathcal{D}}_{1}=\{(x,y):\ \frac{\sigma}{\alpha\beta}\le x\le N+\frac{\sigma}{\alpha\beta},\ 0\le y\le N\}$.
It is clear that $V(x,y)\ge 0$ and $V(x,y)=0$ if and only if $x=x_0$ and $y=y_0$ in $\tilde{\mathcal{D}}_{1}$. And
$$
\frac{dV(x(t),y(t))}{dt}|_{\eqref{mu0xy}}=-(x-x_0)^2(\alpha\beta y+d+\sigma)\le 0
$$
in $\tilde{\mathcal{D}}_{1}$.

 By  LaSalle's Invariance Principle, we know that $(x_0,y_0)$ attracts all orbits of system \eqref{mu0xy}
in $\tilde{\mathcal{D}}_{1}$ except both equilibrium $(N+\frac{\sigma}{\alpha\beta},0)$ and its one dimensional stable manifold $\{(x,y):\ y=0, 0<x<N+\frac{\sigma}{\alpha\beta}\}$. This leads to the conclusion, $E_2(S_2^*,0,I_{s2}^*)$
 attracts all orbits of system (\ref{SIR-dim3}) in $\mathcal{D}_0$ except both $E_0(N, 0, 0)$ and a positive orbit $\gamma $ if $\mathcal{R}_0> 1$ and $\mu=0$.

 Using the similar arguments, we can prove that $E_3(S_3^*,I_{a3}^*,0)$ attracts all orbits of system (\ref{SIR-dim3}) in $\mathcal{D}_0$
 except both $E_0(N, 0, 0)$ and a positive orbit $\gamma $ if $\mathcal{R}_0> 1$ and $\mu=1$.
\end{proof}

 \begin{theorem}
\label{th-globE1}
If $\mathcal{R}_0> 1$, $0<\mu<1$ and $r_a=r_s$,  then the endemic equilibrium $E_1(S^*, I_a^*, I_s^*)$
attracts all orbits of system (\ref{SIR-dim3}) in $\mathcal{D}_0$ except  $E_0(N, 0, 0)$.
\end{theorem}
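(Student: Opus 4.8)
The plan is to reduce the problem to a planar system and then apply the Poincar\'e--Bendixson theory together with a Dulac-type argument. When $r_a=r_s$, the two infected compartments have the same linear decay rate, so the combination $I:=I_a+I_s$ should satisfy a closed equation once one notices that $\mu I_a$-type terms recombine. More precisely, adding the second and third equations of \eqref{SIR-dim3} gives $\dot I=\beta S(I_a+\alpha I_s)-(d+r)I$ with $r:=r_a=r_s$, and the second equation gives $\dot I_a=\mu\beta S(I_a+\alpha I_s)-(d+r)I_a$; hence $\dot{(I_a-\mu I)}=-(d+r)(I_a-\mu I)$, so $I_a-\mu I\to 0$ exponentially and $I_s-(1-\mu)I\to 0$. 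The $\omega$-limit set of any orbit therefore lies on the invariant plane $\{I_a=\mu I,\ I_s=(1-\mu)I\}$, on which the dynamics is governed by the planar system
\begin{equation*}
\begin{cases}
\dot S=(d+\sigma)(N-S)-\beta S(\mu+\alpha(1-\mu))I-\sigma I,\\
\dot I=\bigl(\beta S(\mu+\alpha(1-\mu))-(d+r)\bigr)I,
\end{cases}
\end{equation*}
in the triangle $\mathcal{D}_1=\{(S,I):S\ge0,\ I\ge0,\ S+I\le N\}$. First I would make this reduction rigorous via the theory of asymptotically autonomous systems (e.g. Thieme's results), so that the $\omega$-limit set of \eqref{SIR-dim3} equals the $\omega$-limit set of an orbit of this limiting planar system.

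Next I would analyze the planar system. Writing $k:=\mu+\alpha(1-\mu)\in(0,1]$, it has the equilibria $(N,0)$ (a saddle, with stable manifold the segment $\gamma$) and a unique interior equilibrium $(S^*,I^*)$ with $S^*=(d+r)/(\beta k)=N/\mathcal R_0$, which is locally asymptotically stable by Lemma \ref{L-E1}. To get global attraction I would use a Dulac function of the form $B(S,I)=1/I$ on the interior: then
\begin{equation*}
\frac{\partial}{\partial S}\Bigl(\frac{\dot S}{I}\Bigr)+\frac{\partial}{\partial I}\Bigl(\frac{\dot I}{I}\Bigr)
=-\frac{d+\sigma}{I}-\beta k S\cdot\frac1I+0<0
\end{equation*}
throughout the interior of $\mathcal{D}_1$, which rules out periodic orbits and polycycles. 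Since $\mathcal{D}_1$ is positively invariant and compact, Poincar\'e--Bendixson then forces every orbit with $I(0)>0$ to converge to $(S^*,I^*)$; the orbits with $I(0)=0$ either stay at $E_0$ or run along $\gamma$ toward $E_0$. Pulling this back through the asymptotically-autonomous reduction yields that $E_1$ attracts every orbit of \eqref{SIR-dim3} in $\mathcal{D}_0$ except $E_0$ itself (note that $\gamma$ does not appear in the exceptional set here because along $\gamma$ one has $I_a=I_s=0$, hence $I=0$, and such orbits converge to $E_0$, which is exactly the stated exception).

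The main obstacle I anticipate is justifying the asymptotically-autonomous reduction cleanly: one must check that the reduced planar limit system has no cycles and that its only compact connected invariant sets in $\mathcal{D}_1$ are $\{E_0\}$, $\{E_1\}$, and the closure of $\gamma$ together with $E_0$ and $E_1$ — so that Thieme's dichotomy (an $\omega$-limit set of the full system is either one equilibrium or contains a full orbit together with equilibria on its boundary) actually pins the limit set down to $E_1$ for every non-$E_0$ orbit. The Dulac computation above handles the "no cycles" part; the remaining point is to verify that no orbit of the full three-dimensional system can have $E_0$ in its $\omega$-limit set unless it lies on the stable manifold of $E_0$, which by Lemma \ref{localstablity} is two-dimensional and, by the structure of the equations (an orbit entering a neighborhood of $E_0$ with $I>0$ is pushed away along the unstable direction since $\mathcal R_0>1$), contains no interior orbit of $\mathcal{D}_0$ other than those in the plane $\{I_a=\mu I,I_s=(1-\mu)I\}$ limiting set computations; an alternative that avoids this subtlety is to run the Dulac/Lyapunov argument directly in $\mathbb{R}^3_+$ using the function $L=\ln(I_a+I_s)$-type combination, but the planar reduction is the cleanest route.
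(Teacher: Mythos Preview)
Your approach is correct in outline but follows a genuinely different route from the paper. The paper does \emph{not} pass to an asymptotically autonomous planar system. Instead, it introduces the variables $I=I_a+\alpha I_s$ and $N_1=S+I_a+I_s$ and observes that, when $r_a=r_s=r$, the map $(S,I_a,I_s)\mapsto(S,I,N_1)$ transforms \eqref{SIR-dim3} into a \emph{closed} three–dimensional autonomous system
\[
\dot S=(d+\sigma)N-\sigma N_1-dS-\beta SI,\qquad
\dot I=\tilde\mu SI-(d+r)I,\qquad
\dot N_1=(d+\sigma)N-(d+r+\sigma)N_1+rS,
\]
with $\tilde\mu=(\mu+\alpha(1-\mu))\beta$. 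It then builds a global Liapunov function of the mixed quadratic/logarithmic type,
\[
V_1=\tfrac{\nu_1}{2}(S-S^*)^2+\nu_2\, I^*\!\left(\tfrac{I}{I^*}-1-\ln\tfrac{I}{I^*}\right)+\tfrac{\nu_3}{2}(N_1-N_1^*)^2,
\]
with $\nu_2=\nu_1\beta S^*/\tilde\mu$ and $\nu_3=\nu_1\sigma/r$, checks $\dot V_1\le 0$, and finishes with LaSalle. No Thieme machinery, no Poincar\'e--Bendixson.

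Your reduction is based on the different (and equally valid) observation that $w:=I_a-\mu(I_a+I_s)$ satisfies $\dot w=-(d+r)w$, so the invariant plane $\{(1-\mu)I_a=\mu I_s\}$ is globally exponentially attracting; restricted to that plane the dynamics is the planar $(S,I)$ system you wrote, and the Dulac function $1/I$ kills periodic orbits. Two small corrections: in your divergence computation the second term should be $-\beta k$, not $-\beta kS/I$ (your $\dot S/I$ contains $-\beta kS$ without an $I$ in the denominator); the sign is still negative, so the conclusion stands. Second, your final paragraph is the real cost of your route: to upgrade ``$\omega$-limit sets lie in the plane and are chain-recurrent for the planar flow'' to ``every orbit with $I_a+I_s>0$ actually converges to $E_1$'' you must exclude $E_0$ from the $\omega$-limit set. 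The cleanest way is not the stable-manifold discussion you sketch but simply to invoke uniform persistence (the autonomous special case of the paper's persistence theorem, or a direct argument using the left Perron eigenvector of the cooperative $(I_a,I_s)$-linearization at $E_0$), which gives $\liminf_{t\to\infty}(I_a+I_s)>0$ and forces the $\omega$-limit to be $\{E_1\}$.

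In short: the paper's Liapunov argument is more self-contained and avoids the asymptotically-autonomous bookkeeping, while your planar reduction is conceptually transparent and explains \emph{why} the $r_a=r_s$ case is special (an exact invariant attracting plane), at the price of needing a persistence input to close the last step.
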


\begin{proof}
Let
$I=I_a+\alpha I_s, ~~~N_1=S+I_a+I_s$.
Then under the assumption $r_a=r_s=r$, system (\ref{SIR-dim3}) in $\mathcal{D}_0$ can be written as
\begin{equation}\label{SIN}
\begin{cases}
\dot{S}=(d+\sigma)N -\sigma N_1 -dS -\beta SI,\\
\dot{I}=\tilde{\mu}SI-(d+r)I,\\
\dot{N_1}=(d+\sigma)N -(d+r+\sigma) N_1+rS
\end{cases}
\end{equation}
in $\tilde{\mathcal{D}}_0:=\{(S, I, N_1)|\; S\ge 0,  I\ge 0,   ~N\ge N_1\ge 0 \}$,
where $\tilde{\mu}=(\mu+\alpha(1-\mu))\beta$.

Thus, equilibrium $E_1(S^*, I_a^*, I_s^*)$
 of system (\ref{SIR-dim3}) becomes equilibrium $\tilde{E}_1(S^*, I^*, N_1^*)$  of system (\ref{SIN}) and  $\tilde{E}_1$ is locally asymptotically stable,
 where $I^*=I_a^*+\alpha I_s^*, N_1^*=S^* + I_a^* + I_s^*$.

Applying a typical approach of Liapunov functional,
we define
\begin{equation*}\label{gx}
g(x) = x- 1- \ln x,
\end{equation*}
and construct  a Liapunov functional of system (\ref{SIN})
\begin{equation*}\label{V1}
V_1(S, I, N_1)=\frac{\nu_1}{2} (S-S^*)^2+\nu_2 I^*g(\frac{I}{I^*}) + \frac{\nu_3}{2} (N_1-N_1^*)^2,
\end{equation*}
where arbitrary constant $\nu_1>0$, $\nu_2=\nu_1\beta S^*/\tilde{\mu}$ and $\nu_3=\nu_1\sigma/r$.
Note that $g (x)\ge g (1) = 0$ for all $x > 0$   and the global minimum $g (x) = 0$ is attained if and only if $x = 1$.
Thus, $V_1(S, I, N_1)\ge 0$ and $V_1(S, I, N_1)=0$ if and only if $S=S^*$, $I=I^*$ and $N_1=N_1^*$ in
$\tilde{\mathcal{D}}_0$.

The derivative of $V_1$ along the trajectories of system  \eqref{SIN} is
\begin{equation*}\label{dV1}
\begin{split}
\frac{dV_1(S,I, N_1)}{dt}=& -\nu_1 d {S^*}^2(x-1)^2 -\nu_3(d+r+\sigma) {N_1^*}^2(z-1)^2
\\
& - \nu_1\beta  {S^*}^2I^*y(x-1)^2\le 0,
\end{split}
\end{equation*}
 where
$x=\frac{S}{S^*}, ~~y=\frac{I}{I^*}, ~~ z=\frac{N_1}{N_1^*}$.

Note that the only compact invariant subset of the set $\{(S,I, N_1):\ \frac{dV_1(S,I, N_1)}{dt}=0\}$ is the singleton $\tilde{E}_1(S^*, I^*, N_1^*)$
in $\tilde{\mathcal{D}}_0$. Consequently, we can conclude that $E_1(S^*, I_a^*, I_s^*)$ is globally asymptotically stable
and attracts all orbits of system (\ref{SIR-dim3}) in $\mathcal{D}_0$ except  $E_0(N, 0, 0)$.
\end{proof}

From Theorem \eqref{th-globE1} and  the continuity of solutions with respect to parameters $r_a$ and $r_s$, we obtain
 the following results.
\begin{theorem}
\label{th-globE1-2}
If $\mathcal{R}_0> 1$ and $0<\mu<1$,  then the endemic equilibrium $E_1(S^*, I_a^*, I_s^*)$
is globally asymptotical stable in  the interior of   $\mathcal{D}_0$  for  $ 0<|r_s-r_a|\ll 1$.
\end{theorem}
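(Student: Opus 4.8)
The plan is to bootstrap from Theorem \ref{th-globE1}, which settles the case $r_a=r_s$, using continuous dependence of solutions on the parameters (Theorem \ref{existenceUni}) together with the uniform persistence of the disease; the argument is by contradiction and compactness of limit sets. A tempting alternative — perturbing the Liapunov functional $V_1$ constructed in the proof of Theorem \ref{th-globE1} into a Liapunov functional for $r_a\ne r_s$ — looks unworkable, because $dV_1/dt$ along \eqref{SIN} is only negative \emph{semi}definite (it vanishes on the entire line $\{S=S^*,\,N_1=N_1^*\}$), so an $O(|r_a-r_s|)$ perturbation of the vector field can flip its sign away from $E_1$.

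Set-up. Freeze all parameters except $r_a$, with $\mathcal{R}_0>1$ and $0<\mu<1$ at $r_a=r_s$; by continuity these inequalities, and hence (Lemma \ref{existen}) the endemic equilibrium $E_1=E_1(r_a)$, persist for $r_a$ in a neighborhood $\mathcal{N}$ of $r_s$, with $E_1(r_a)$ depending continuously on $r_a$ and (Lemma \ref{L-E1}) locally asymptotically stable. From the uniform persistence theorem proved above — invoking the standard robustness of uniform persistence under parameter perturbation, which here only requires continuity and isolatedness of the (essentially one-dimensional) boundary dynamics around $E_0$ — one obtains a compact set $K\subset\mathrm{int}\,\mathcal{D}_0$, independent of $r_a\in\mathcal{N}$, such that every solution with initial point in $\mathrm{int}\,\mathcal{D}_0$ eventually enters $K$; that orbits stay uniformly off the face $R=0$ follows from $\dot R=r_aI_a+r_sI_s-(d+\sigma)R>0$ there. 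Shrinking $\mathcal{N}$, fix $\rho>0$ so that for every $r_a\in\mathcal{N}$ the ball $B(E_1(r_a),\rho)$ lies in the basin of attraction of $E_1(r_a)$ and $E_1(r_a)\in B(E_1(r_s),\rho/2)$.

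The contradiction. Assume the assertion fails: there are $r_a^{(n)}\to r_s$ and $p_n\in\mathrm{int}\,\mathcal{D}_0$ whose forward orbit under the $r_a^{(n)}$-system does not tend to $E_1(r_a^{(n)})$. Then $\Omega_n:=\omega(p_n)$ is a nonempty compact invariant set contained in $K$ with $\Omega_n\ne\{E_1(r_a^{(n)})\}$; since $B(E_1(r_a^{(n)}),\rho)$ lies in the basin of $E_1(r_a^{(n)})$, the orbit of $p_n$ — which does not converge to $E_1(r_a^{(n)})$ — never meets that ball after some time, so $\Omega_n\cap B(E_1(r_a^{(n)}),\rho)=\emptyset$ and therefore $\Omega_n\subset K\setminus B(E_1(r_s),\rho/2)$ for $n$ large. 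Pick $q_n\in\Omega_n$; along a subsequence $q_n\to q_\ast\in K\setminus B(E_1(r_s),\rho/2)$, so $q_\ast\ne E_1(r_s)$. As $q_n$ lies in an $\omega$-limit set, there is a full orbit through $q_n$ contained in $\Omega_n\subset K$; by continuous dependence on initial data and parameters (Theorem \ref{existenceUni}) these orbits converge, uniformly on each $[-T,T]$, to the $r_s$-orbit $\psi(\cdot)$ through $q_\ast$, which is thus defined for all $t\in\mathbb{R}$ and satisfies $\psi(t)\in K\subset\mathrm{int}\,\mathcal{D}_0$.

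Conclusion and obstacle. By Theorem \ref{th-globE1}, $\psi(t)\to E_1(r_s)$ as $t\to+\infty$. Along $\psi$ the functional $V_1$ (well defined on $K$, where $I=I_a+\alpha I_s>0$) is nonincreasing and bounded, so $V_1(\psi(t))$ tends to a limit $L$ as $t\to-\infty$ with $L\ge V_1(\psi(0))$; the $\alpha$-limit set of $\psi$ is a nonempty compact invariant subset of $\{dV_1/dt=0\}$, which by the proof of Theorem \ref{th-globE1} reduces to $\{E_1(r_s)\}$, so $L=V_1(E_1(r_s))=0$. Since $V_1\ge 0$ vanishes only at $E_1(r_s)$, this forces $V_1(\psi(0))=0$, i.e.\ $q_\ast=E_1(r_s)$ — a contradiction (equivalently, $\psi$ would be a nonconstant orbit homoclinic to the locally asymptotically stable $E_1(r_s)$, which cannot exist). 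Hence $E_1(r_a)$ is globally asymptotically stable in $\mathrm{int}\,\mathcal{D}_0$ for all $r_a$ close to $r_s$. The crux of the whole argument is the set-up step: securing the compact absorbing set $K$ and the basin radius $\rho$ \emph{uniformly} in $r_a$ near $r_s$, i.e.\ the robustness of uniform persistence — everything else is soft.
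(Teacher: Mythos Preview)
Your argument is correct. The paper offers no proof beyond the single assertion that the result follows from Theorem~\ref{th-globE1} and continuity of solutions in $(r_a,r_s)$; your compactness/contradiction scheme---robust uniform persistence for a parameter-independent absorbing set $K\subset\mathrm{int}\,\mathcal{D}_0$, a uniform local basin radius from the hyperbolicity in Lemma~\ref{L-E1}, then extraction of a full limiting $r_s$-orbit $\psi$ in $K$---is exactly the rigorous content behind that assertion, so you are filling in what the paper omits rather than taking a different route. Two small remarks: (i) the Liapunov analysis of the $\alpha$-limit set is unnecessary, since the full orbits through $q_n$ lie in the closed set $\Omega_n\subset K\setminus B(E_1(r_s),\rho/2)$, hence so does $\psi$ after passing to the limit, and then $\psi(t)\not\to E_1(r_s)$ already contradicts Theorem~\ref{th-globE1} directly; (ii) invoking the paper's uniform-persistence theorem silently imports its hypothesis $\alpha\beta>0$, so the borderline case $\alpha=0$ (where the face $\{I_a=0\}$ is invariant) would need a separate, easy, treatment via the $(S,I_a)$-subsystem.
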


\section{Discussion}

In this model, we divide the period of the disease transmission into two seasons. In fact, it can be divided into  $n$ seasons for any given $n\in\mathbb{Z}_+$.
Compared with continuous periodic systems, our piecewise  continuous periodic model can provide a straightforward method to evaluate the basic reproduction
number $\mathcal{R}_0$, that is to calculate the spectral radius of matrix $\Phi_{F-V}(\omega)=e^{(F_2-V)\theta\omega}e^{(F_1-V)(1-\theta)\omega}$.
 It is shown that the length of the season, the transmission rate
and the existence of asymptomatic infective affect the basic reproduction number  $\mathcal{R}_0$, and there is still the risks of infectious disease outbreaks due to the existence of asymptomatic infection
 even if all symptomatic infective individuals has been quarantined, that is, $\alpha=0$. This provides an intuitive basis for
  understanding that the  asymptomatic infective individuals and the disease seasonal transmission promote  the evolution of epidemic.
  And theoretical dynamics of the model allow us to predictions of outcomes of control strategies during the course of the epidemic.


\end{document}